\documentclass[11pt, reqno]{amsart}
\usepackage{amsfonts,latexsym,enumerate}
\usepackage{amsmath}
\usepackage{amscd}
\usepackage{float,amsmath,amssymb,mathrsfs,bm,multirow,graphics}
\usepackage[dvips]{graphicx}
\usepackage[percent]{overpic}
\usepackage{color}
\usepackage[numbers,sort&compress]{natbib}
\usepackage{epstopdf}
\usepackage{tikz,pgf}
\usetikzlibrary{decorations.markings, decorations.pathmorphing}
\usetikzlibrary{arrows,decorations,backgrounds,scopes,plotmarks}

\addtolength{\topmargin}{-10ex}
\addtolength{\oddsidemargin}{-3em}
\addtolength{\evensidemargin}{-3em}
\addtolength{\textheight}{15ex}
\addtolength{\textwidth}{5em}

\newcommand{\R}{{\Bbb R}}

\newcommand{\C}{{\Bbb C}}

\newcommand{\I}{{\bf I}}
\newcommand{\0}{{\bf 0}}


\newcommand{\tr}{\text{\upshape tr\,}}

\newcommand{\diag}{\text{\upshape diag\,}}
\newcommand{\re}{\text{\upshape Re\,}}
\newcommand{\im}{\text{\upshape Im\,}}


\newtheorem{theorem}{Theorem}
\newtheorem{proposition}{Proposition}[section]
\newtheorem{lemma}[proposition]{Lemma}

\newtheorem{remark}[proposition]{Remark}

\numberwithin{equation}{section}


\usepackage[colorlinks=true]{hyperref}
\hypersetup{urlcolor=blue, citecolor=red, linkcolor=blue}

\input epsf
\title[Asymptotics for the Sasa--Satsuma equation]
{Asymptotics for the Sasa--Satsuma equation \\ in terms of a modified Painlev\'e II transcendent}

\author{Lin Huang}
\address{School of sciences, Hangzhou Dianzi University, 310018, Hangzhou, China.}
\email{lin.huang@hdu.edu.cn}

\author{Jonatan Lenells}
\address{Department of Mathematics, KTH Royal Institute of Technology, 100 44 Stockholm, Sweden.}
\email{jlenells@kth.se}

\begin{document}

\begin{abstract}
\noindent
We consider the initial-value problem for the Sasa--Satsuma equation on the line with decaying initial data. Using a Riemann--Hilbert formulation and steepest descent arguments, we compute the long-time asymptotics of the solution in the sector $|x| \leq M t^{1/3}$, $M$ constant. It turns out that the asymptotics can be expressed in terms of the solution of a modified Painlev\'e II equation. Whereas the standard Painlev\'e II equation is related to a $2 \times 2$ matrix Riemann--Hilbert problem, this modified Painlev\'e II equation is related to a $3 \times 3$ matrix Riemann--Hilbert problem.
\end{abstract}

\maketitle

\noindent
{\small{\sc AMS Subject Classification (2010)}: 35Q15, 37K15, 41A60.}

\noindent
{\small{\sc Keywords}: Sasa--Satsuma equation, Riemann--Hilbert problem, asymptotics, initial value problem.}


\section{Introduction}
In this paper, we consider the long-time behavior of the solution of the Sasa--Satsuma equation \cite{ss-1991}
\begin{align}\label{sseq}
  u_t-u_{xxx}-6|u|^2u_x-3u(|u|^2)_x=0,
\end{align}
with initial data $u(x,0)= u_0(x) \in \mathcal{S}(\R)$ in the Schwartz class. Our main result shows that $u(x,t)$ admits an expansion to all orders in the asymptotic sector $|x| < M t^{1/3}$ of the form
\begin{align}\label{uasymptoticsintro}
u(x,t) \sim \sum_{j=1}^\infty \frac{u_j(y)}{t^{j/3}}, \qquad t \to \infty,
\end{align}
where $\{u_j(y)\}_1^\infty$ are smooth functions of $y \doteq x/(3t)^{1/3}$ and $M > 0$ is a constant. It also shows that the leading coefficient $u_1(y)$ is given by
$$u_1(y) = i\frac{u_P(y)}{3^{1/3} \sqrt{2}},$$ 
where $u_P(y)$ satisfies the following modified Painlev\'e II equation:
\begin{align}\label{complexpainleveII}
  u_{P}''(y) + yu_P(y) + 2u_P(y)|u_P(y)|^2 = 0.
\end{align}  
Equation (\ref{complexpainleveII}) coincides with the standard Painlev\'e II equation
\begin{align}\label{painleveII}  
  u_{P}''(y) - yu_P(y) - 2u_P(y)^3 = 0,
\end{align}
except for a sign difference and the presence of the absolute value squared in the last term. We will show that (\ref{complexpainleveII}) is related to a $3 \times 3$ matrix RH problem much in the same way that (\ref{painleveII}) is related to a $2 \times 2$ matrix RH problem cf. \cite{FIKN2006}. 
In the case of a real-valued solution, equation (\ref{sseq}) reduces to a version of the mKdV equation, (\ref{complexpainleveII}) reduces (up to a sign) to (\ref{painleveII}), and the expansion (\ref{uasymptoticsintro}) reduces to the analogous asymptotic formula for the corresponding mKdV equation (see \cite{dz-1993}, and \cite{CL2019} for the higher order terms, in the case of the standard mKdV equation).

It turns out that the leading coefficient $u_1(y)$ in (\ref{uasymptoticsintro}) has constant phase, that is, $u_1(y) = |u_1(y)| e^{i\alpha}$ where $\alpha \in \R$ is independent of $y$. It is somewhat remarkable that this is the case for any choice of the complex-valued initial data $u_0(x) = u(x,0)$; however, we also recall that the Sasa--Satsuma has a class of one-soliton solutions of constant phase (see \cite{ss-1991} or \cite{ATW2011}):
$$u_{\text{1-sol}}(x,t) = \frac{\sqrt{2} a e^{a(x + a^2t - x_0)}e^{i\phi}}{1 + e^{2a(x + a^2 t - x_0)}}, \qquad \text{$a, \phi, x_0$ real constants}.$$

The starting point for our analysis is a Riemann--Hilbert (RH) representation for the solution of (\ref{sseq}) obtained via the inverse scattering transform formalism. The asymptotic formula (\ref{uasymptoticsintro}) is derived by performing a Deift--Zhou \cite{dz-1993} steepest descent analysis of this RH problem. The main novelty compared with the analogous derivation for the mKdV equation is that the Lax pair of (\ref{sseq}) involves $3\times 3$ instead of $2\times 2$ matrices. 

The inverse scattering problem for (\ref{sseq}) was studied already by Sasa and Satsuma \cite{ss-1991}. 
The initial-boundary value problem for (\ref{sseq}) on the half-line was considered in \cite{xf-2013}. Asymptotic formulas for the long-time behavior in the sector $0 < c_1 < x < c_2$ were obtained in \cite{LG2019,lgx-2018}. 

Our main results are presented in Section \ref{mainsec}. They are stated in the form of three theorems (Theorem \ref{th1}-\ref{th3}) whose proofs are given in Section \ref{th1proofsec}, \ref{th2proofsec}, and \ref{th3proofsec}, respectively. Section \ref{laxsec} recalls the Lax pair formulation of (\ref{sseq}). The RH problem associated with the modified Painlev\'e II equation (\ref{complexpainleveII}) is discussed in Appendix \ref{appA}. Appendix \ref{appB} considers an extension of this RH problem which is needed to obtain the higher order terms in (\ref{uasymptoticsintro}).

\section{Main results}\label{mainsec}
Our first theorem shows how solutions of (\ref{sseq}) can be constructed starting from an appropriate spectral function $\rho_1(k)$. We let $\mathcal{S}(\R)$ denote the Schwartz class of smooth (complex-valued) rapidly decaying functions.  

\begin{theorem}[Construction of solutions]\label{th1}
Suppose $\rho_1 \in \mathcal{S}(\R)$.
Define the $3 \times 3$-matrix valued jump matrix $v(x,t,k)$ by
\begin{align}\label{vdef}
v(x,t,k) = \begin{pmatrix} \I_{2\times 2} & \rho^{\dag}(\bar{k})e^{-2ikx+8ik^3t}\\\rho(k)e^{2ikx-8ik^3t}&1+\rho(k)\rho^{\dag}(\bar{k})\end{pmatrix},
\end{align}
where
$$\rho(k)\doteq \begin{pmatrix}\rho_1(k) & \rho_2(k)\end{pmatrix}, \quad \rho^{\dag}(\bar{k})\doteq \begin{pmatrix}\overline{\rho_1(\bar{k})}\\\overline{\rho_2(\bar{k})}\end{pmatrix}, \quad \rho_2(k) \doteq \overline{\rho_1(-\bar{k})}.$$

Then the $3 \times 3$-matrix RH problem
\begin{itemize}
\item $m(x,t,k)$ is analytic for $k \in \C\setminus \R$ and extends continuously to $\R$ from the upper and lower half-planes;
\item the boundary values $m_\pm(x,t,k) = m(x,t,k\pm i0)$ obey the jump condition $m_+(x,t,k)=m_-(x,t,k)v(x,t,k)$ for $k \in \R$;
\item $m(x,t,k)=I+O(k^{-1})$ as $k\to \infty$;
\end{itemize}
has a unique solution for each $(x,t) \in \R^2$ and the limit $\lim_{k\to \infty} (k m(x,t,k))_{13}$ exists for each $(x,t) \in \R^2$. Moreover, the function $u(x,t)$ defined by 
\begin{align}\label{ulim}
  u(x,t) = 2i\lim_{k\to \infty} \big(k m(x,t,k)\big)_{13}
\end{align}
is a smooth function of $(x,t) \in \R^2$ with rapid decay as $|x| \to \infty$ which satisfies the Sasa--Satsuma equation \eqref{sseq} for $(x,t) \in \R^2$. 
\end{theorem}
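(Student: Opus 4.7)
The plan is to follow the standard dressing program: solve the RH problem by a vanishing lemma, identify the Lax-pair matrices from the asymptotic expansion of $m$ at $k=\infty$, and extract (\ref{sseq}) from the zero-curvature compatibility. For solvability, I would observe first that for real $k$ the jump matrix $v(x,t,k)$ is Hermitian (since $\rho^{\dag}(\bar k)|_{k \in \R}$ is the conjugate transpose of $\rho(k)$) and strictly positive definite: the upper-left block is $\I_{2\times 2}$ and the Schur complement in the $(3,3)$ position is $(1+\rho\rho^{\dag}) - \rho\rho^{\dag} = 1$, independently of the exponentials. Combined with the $\mathcal{S}(\R)$-decay of $v - I$ in $k$, this places us in the setting of Zhou's vanishing lemma, which rules out nontrivial solutions of the homogeneous RH problem; together with the Fredholm alternative for the singular integral equation $(1-\mathcal{C}_w)\mu = I$ on $L^2(\R)$, it yields unique solvability for every $(x,t) \in \R^2$ and an asymptotic expansion $m = I + m^{(1)}(x,t)/k + O(k^{-2})$, so the limit in (\ref{ulim}) exists. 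Smoothness of $u$ in $(x,t)$ follows by differentiating the resolvent $(1-\mathcal{C}_w)^{-1}$ in the parameters, and rapid decay of $u$ as $|x|\to\infty$ is obtained from the oscillatory factors $e^{\pm 2i(kx - 4k^3 t)}$ via repeated integration by parts in $k$.

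For the PDE, introduce $\Phi(x,t,k) \doteq m(x,t,k) e^{-i(kx - 4k^3t)\Lambda}$ with $\Lambda = \diag(1,1,-1)$. A direct computation shows that the conjugated jump $e^{i(kx-4k^3t)\Lambda}\, v(x,t,k)\, e^{-i(kx-4k^3t)\Lambda}$ is independent of $(x,t)$, so $\Phi_x \Phi^{-1}$ and $\Phi_t \Phi^{-1}$ have no jump across $\R$; both grow polynomially at $k=\infty$ (linearly and cubically respectively), so by Liouville they are in fact polynomials in $k$. Matching against the $1/k$-expansion of $m$ gives $\Phi_x \Phi^{-1} = -ik\Lambda - i[m^{(1)},\Lambda]$, whose $(1,3)$-entry equals $2i\, m^{(1)}_{13} = u(x,t)$, reproducing the $U$-matrix of the Lax pair of Section \ref{laxsec}; similarly $\Phi_t \Phi^{-1}$ reproduces the $V$-matrix of degree three in $k$ with coefficients expressible through $m^{(1)}, m^{(2)}, m^{(3)}$. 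The zero-curvature relation $\partial_t(\Phi_x \Phi^{-1}) - \partial_x(\Phi_t \Phi^{-1}) + [\Phi_x\Phi^{-1}, \Phi_t\Phi^{-1}] = 0$ then collapses, power by power in $k$, precisely to (\ref{sseq}).

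The main obstacle I anticipate is the vanishing lemma in the $3 \times 3$ block-Hermitian setting: while the positivity of $v$ is direct, one must verify that it interacts correctly with the symmetry $\rho_2(k) = \overline{\rho_1(-\bar k)}$ and that the singular-integral framework functions uniformly in $(x,t)$, so that the construction is genuinely global. By contrast, the PDE extraction, though algebraically longer than in the $2\times 2$ mKdV case, is conceptually routine once the dressing transformation and asymptotic expansion are in hand.
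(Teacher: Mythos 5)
Your overall route is the same as the paper's: Hermitian positive definiteness of $v$ on $\R$ (your Schur-complement computation is exactly the right observation) gives Zhou's vanishing lemma, the singular integral equation $(1-\mathcal{C}_w)\mu = I$ then gives unique solvability and the large-$k$ expansion, and the equation is extracted by a dressing argument; the paper phrases the dressing step by showing that $\mathbb{L}m \doteq m_x+ik[\Lambda,m]-\mathsf{U}m$ and the analogous cubic operator $\mathbb{Z}m$ satisfy the homogeneous RH problem and hence vanish, which is equivalent to your Liouville argument for $\Phi_x\Phi^{-1}$ and $\Phi_t\Phi^{-1}$ (note you also need $\det m\equiv 1$, which follows from $\det v=1$, for $\Phi^{-1}$ to exist).

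Two points need to be carried out rather than merely flagged. First, to say that $-ik\Lambda+i[\Lambda,m_1]$ ``reproduces the $U$-matrix'' you must know that $i[\Lambda,m_1]$ has the specific entry pattern of $\mathsf{U}$, i.e.\ that its $(2,3)$, $(3,1)$, $(3,2)$ entries are $\bar u$, $-\bar u$, $-u$ with $u=2i(m_1)_{13}$; without this, the zero-curvature relation collapses only to a coupled system for four independent entries of $m_1$, not to the scalar equation (\ref{sseq}). The paper gets this from the symmetries $v=v^{\dagger}(x,t,\bar k)=\mathcal{A}\overline{v(x,t,-\bar k)}\mathcal{A}$, which by uniqueness of the RH solution force $m=m^{\dagger}(\bar k)^{-1}=\mathcal{A}\overline{m(-\bar k)}\mathcal{A}$ and hence $m_1=-m_1^{\dagger}=-\mathcal{A}\overline{m_1}\mathcal{A}$; you list this as an anticipated obstacle but do not resolve it, and it is the one essential step your sketch omits. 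Second, the rapid decay of $u$ as $|x|\to\infty$ does not follow from ``repeated integration by parts in $k$'' alone: the phase $2kx-8k^3t$ has real stationary points for one sign of $x/t$, and $\mu$ itself depends on $x$, so the paper instead invokes a Deift--Zhou steepest descent (contour deformation with analytic approximation of $\rho$) to get this decay; your argument should be adjusted accordingly.
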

\begin{proof}
See Section \ref{th1proofsec}.
\end{proof}

Our second theorem gives the long-time asymptotics of the solutions constructed in Theorem \ref{th1} in the sector $|x| \leq M t^{1/3}$. 

\begin{theorem}[Asymptotics of constructed solutions]\label{th2}
Under the assumptions of Theorem \ref{th1}, the solution $u(x,t)$ of (\ref{sseq}) defined in (\ref{ulim}) satisfies the following asymptotic formula as $t \to \infty$:
\begin{align}\label{uasymptotics}
u(x,t) = \sum_{j=1}^N \frac{u_j(y)}{t^{j/3}} + O\big(t^{-\frac{N+1}{3}}\big),  \qquad |x| \leq M t^{1/3}, 
\end{align}
where 
\begin{itemize}
\item The formula holds uniformly with respect to $x$ in the given range for any fixed $M > 0$ and $N \geq 1$.
\item The variable $y$ is defined by
\begin{align*}
  y= \frac{x}{(3t)^{1/3}}.
\end{align*}

\item $\{u_j(y)\}_1^\infty$ are smooth functions of $y \in \R$.

\item The function $u_1(y)$ is given by 
\begin{align}\label{u1expression}
u_1(y) = i\frac{u_P(y;s)}{3^{1/3} \sqrt{2}}, 
\end{align}
where $s \doteq  \rho_1(0)$ and $u_P(y; s)$ denotes the smooth solution of the modified Painlev\'e II equation (\ref{complexpainleveII}) corresponding to $s$ according to Lemma \ref{painlevelemma}. In particular, $u_1(y)$ has a constant phase, that is, $\arg u_1$ is independent of $y$. 
\end{itemize}
\end{theorem}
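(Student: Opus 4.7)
The plan is to carry out a Deift--Zhou steepest descent analysis of the $3\times 3$ RH problem from Theorem~\ref{th1}, tailored to the Painlev\'e sector $|x|\leq M t^{1/3}$. In this sector the variable $y = x/(3t)^{1/3}$ ranges only over the compact interval $[-M/3^{1/3},\,M/3^{1/3}]$, so estimates need only be uniform in bounded $y$ but uniform as $t\to\infty$. The phase in the jump (\ref{vdef}) is $\Phi(k)\doteq 2kx - 8k^3 t$, whose two real saddle points $\pm\sqrt{x/(12t)}$ coalesce at the origin at rate $t^{-1/3}$, producing a confluent rather than oscillatory regime and pointing to the Painlev\'e scaling $k = z/(3t)^{1/3}$.

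Under that scaling the phase becomes $i\Phi = 2iyz - \tfrac{8i}{3}z^3$, which is precisely the phase of the model RH problem of Appendix~\ref{appA}. The pulled-back jump has the same block form as (\ref{vdef}) but with $\rho_j$ evaluated at $z/(3t)^{1/3}$; Taylor expansion of the Schwartz functions $\rho_1,\rho_2$ about $k=0$ yields
\begin{equation*}
\rho_j\!\left(\frac{z}{(3t)^{1/3}}\right) = \sum_{n=0}^{N-1}\frac{\rho_j^{(n)}(0)}{n!}\,\frac{z^n}{(3t)^{n/3}} + O\!\left(t^{-N/3}\right),
\end{equation*}
which decomposes the rescaled jump into a leading piece depending only on $s = \rho_1(0)$ and $\bar s = \rho_2(0)$, plus corrections of order $t^{-j/3}$. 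Standard lens-opening in the regions where $\re(i\Phi)$ has definite sign, together with Schwartz decay of $\rho_j$, reduces the off-origin contribution to exponentially small error. The leading piece is by construction the model problem of Appendix~\ref{appA}; reading off its $(1,3)$-entry at $\infty$ gives $u_P(y;s)$, and undoing the spectral rescaling produces the factor $3^{-1/3}$ from the Jacobian $dk=dz/(3t)^{1/3}$ in (\ref{ulim}) together with the $1/\sqrt{2}$ coming from the Appendix~\ref{appA} normalization, yielding the explicit formula (\ref{u1expression}).

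To obtain the full expansion (\ref{uasymptotics}), I would pass to the ratio of $m$ with a suitable model parametrix and analyze the resulting small-norm RH problem, whose jump is of the form $I + t^{-1/3}w_1(y,z) + t^{-2/3}w_2(y,z) + \cdots$; solving the associated Cauchy integral equation by Neumann series recovers the coefficients $u_j(y)$ one at a time, and the extended model problem of Appendix~\ref{appB} supplies the higher-order building blocks needed to close the recursion. The constancy of $\arg u_1$ is then a consequence of the involutive symmetry $\rho_2(k) = \overline{\rho_1(-\bar k)}$: it implies that the model problem is covariant under the rotation $s\mapsto e^{i\alpha}s$ via a diagonal conjugation $m\mapsto D_\alpha m D_\alpha^{-1}$, pinning the phase of $u_P(y;s)$ to $\arg s$ uniformly in $y$.

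The main technical obstacle, in my view, will be the transition from $2\times 2$ to $3\times 3$ matrices in the contour deformation step: the $(2,2)$ block of $v$ is the $2\times 2$ identity rather than a scalar, so the scalar factorization $v = (b_-)^{-1}b_+$ familiar from mKdV must be replaced by a block triangular factorization, and uniform $L^2$-invertibility of the associated singular integral operator on the deformed contour has to be checked uniformly over the sector. Together with the uniform estimates needed to bootstrap the Neumann series to all orders in $t^{-1/3}$, this is where I expect the bulk of the genuine work to lie.
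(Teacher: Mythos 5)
Your overall architecture coincides with the paper's: rescale $k=z/(3t)^{1/3}$, replace $\rho$ by its Taylor polynomial at $k=0$, compare with the model Riemann--Hilbert problems of Appendices \ref{appA} and \ref{appB} (the paper keeps the two saddle points $\pm z_0=\pm\sqrt{y}/2$ inside the model contour and uses Lemma \ref{ZlemmaIVg}, with the Appendix \ref{appA} problem appearing only as its $s$-term truncation), and then run a small-norm argument for $\hat m = m^{(1)}m_0^{-1}$ with Neumann-series expansion in powers of $t^{-1/3}$ to get all orders; the factor $3^{-1/3}$ and the $1/\sqrt{2}$ come out exactly as you say. However, two steps as you describe them would not go through. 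First, the lens opening: $\rho_1$ is only Schwartz on $\R$, not analytic, so it cannot be deformed off the axis at all, and ``Schwartz decay'' does not produce exponentially small off-origin errors. One must first perform the analytic approximation $\rho=\rho_a+\rho_r$ of Lemma \ref{decompositionlemmaleq} (rational/analytic splitting adapted to the sign regions of $\re\Phi$); the analytic part carries the exponential damping on the lens contours, while the remainder $\rho_r$ contributes only $O(t^{-N})$ — algebraically, not exponentially, small — which is precisely what caps the expansion at order $N$. Without this splitting the deformation step fails, and with it the error mechanism is different from the one you state.

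Second, your argument for the constant phase of $u_1$ is incomplete. The covariance you invoke is real: with $D_\alpha=\diag(e^{-i\alpha},e^{i\alpha},1)$ one has $m^P(y,z;e^{i\alpha}s)=D_\alpha m^P(y,z;s)D_\alpha^{-1}$, hence $u_P(y;e^{i\alpha}s)=e^{-i\alpha}u_P(y;s)$. But this only tells you how the phase transforms when $s$ is rotated; it says nothing about the $y$-dependence of $\arg u_P(y;s)$ for a fixed $s$, so it does not ``pin the phase uniformly in $y$.'' To close this you would either need an additional step (e.g. reduce to real $s$ and exhibit the extra reflection symmetry $m^P(y,z)=\overline{m^P(y,-\bar z)}$, which forces $u_P(y;s)$ purely imaginary for real $s$), or argue as the paper does inside Lemma \ref{painlevelemma}: the Lax pair for $m^P$ yields $\psi_3''+y\psi_3+16|\psi_3|^2\psi_3=0$, and writing $\psi_3=re^{i\alpha}$ gives $r^2\alpha'\equiv c_0$ with $c_0=0$ by decay as $y\to-\infty$, so $\arg\psi_3$ is constant. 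Since the theorem statement already defines $u_P(y;s)$ via Lemma \ref{painlevelemma}, simply citing that lemma suffices; but the one-line covariance argument as written does not prove the claim.
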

\begin{proof}
See Section \ref{th2proofsec}.
\end{proof}

\begin{remark}[Hierarchy of differential equations]\upshape
Substituting the expansion (\ref{uasymptotics}) into (\ref{sseq}) and identifying coefficients of powers of $t^{-1/3}$, we infer that the coefficients $\{u_j(y)\}_1^\infty$ in (\ref{uasymptotics}) satisfy a hierarchy of linear ordinary differential equations. The first two equations in this hierarchy are 
\begin{subequations}
\begin{align}\label{ujhierarchya}
& u_1''' + y u_1' + u_1 = - 3^{5/3} (3 |u_1|^2u_1' + u_1^2 \bar{u}_1'),
	\\
& u_2''' + y u_2' + 2u_2 = - 3^{5/3}\big( 3|u_1|^2 u_2' + u_1^2 \bar{u}_2' + 3\bar{u}_1 u_1' u_2 + 2u_1 \bar{u}_1' u_2 + 3 u_1 u_1' \bar{u}_2 \big).
\end{align}	
\end{subequations}
As expected, the function $u_1(y)$ in (\ref{u1expression}) satisfies the first of these equations.
Indeed, if $u_1(y)$ is given by (\ref{u1expression}) where $u_P(y) \equiv u_P(y; s)$ satisfies (\ref{complexpainleveII}), then (\ref{ujhierarchya}) reduces to the equation $|u_P(y)|^3 (\arg u_P)'(y) = 0$, which is satisfied for solutions $u_P$ of constant phase.   
\end{remark}

By applying the above two theorems in the case when $\rho_1(k)$ is the ``reflection coefficient'' corresponding to some given initial data $u_0(x)$, we obtain our third theorem, which establishes the asymptotic behavior of the solution of the initial-value problem for (\ref{sseq}) in the sector $|x| \leq M t^{1/3}$. Before stating the theorem, we  introduce some notation. 

Given $u_0 \in \mathcal{S}(\R)$, define $\mathsf{U}_0(x)$ and $\Lambda$ by
\begin{align*}
\mathsf{U}_0(x) = \begin{pmatrix}
0 	& 0 & u_0(x) \\
0 & 0 & \overline{u_0(x)} \\
-\overline{u_0(x)} & -u_0(x) & 0
\end{pmatrix}, \quad
 \Lambda=\begin{pmatrix}
1 	& 0 & 0 \\
0 & 1 & 0 \\
 0 & 0 & -1
\end{pmatrix}.
\end{align*}
Define the $3 \times 3$-matrix valued function $X(x,k)$ as the unique solution of the Volterra integral equation
\begin{align*}
  X(x,k) = I - \int_x^{\infty} e^{i k(x'-x) \hat{\Lambda}} (\mathsf{U}_0X)(x',k) dx', \qquad x \in \R, \ k \in \R,
\end{align*}
where $\hat{\Lambda}$ acts on a matrix $A$ by $\hat{\Lambda} A = [\Lambda, A]$, i.e., $e^{\hat{\Lambda}} A = e^{\Lambda} Ae^{-\Lambda}$.
Define the scattering matrix $s(k)$ by 
\begin{align}\label{sdef}
& s(k) = I - \int_\R e^{ikx\hat{\Lambda}}(\mathsf{U}X)(x,k)dx, \qquad k \in \R.
\end{align}
Then the ``reflection coefficient'' $\rho_1(k)$ is defined by
\begin{align}\label{rho1def}
\rho_1(k) = \frac{\overline{s_{13}(k)}}{\overline{s_{33}(k)}}, \qquad k \in \R.
\end{align}
We will see in Section \ref{th3proofsec} that the $(33)$ entry $s_{33}(k)$ of $s(k)$ has an analytic continuation to the upper half-plane.
Possible zeros of $s_{33}(k)$ give rise to poles in the RH problem, see (\ref{mPsidef}). 
For simplicity, we assume that no such poles are present (solitonless case).

\begin{theorem}[Asymptotics for initial value problem]\label{th3}
Suppose $u_0 \in \mathcal{S}(\R)$ and define $s(k)$ and $\rho_1(k)$ by (\ref{sdef}) and (\ref{rho1def}). Suppose the (33)-entry $s_{33}(k)$ is nonzero for $\im k \geq 0$.

Then $\rho_1 \in \mathcal{S}(\R)$ and the solution $u(x,t)$ of (\ref{sseq}) defined in terms of $\rho_1(k)$ by (\ref{ulim}) is the unique solution of the initial value problem for (\ref{sseq}) with initial data $u(x,0) = u_0(x)$ and rapid decay as $|x| \to \infty$. Moreover, $u(x,t)$ obeys the asymptotic formula (\ref{uasymptotics}) as $t \to \infty$.
\end{theorem}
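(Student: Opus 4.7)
\medskip

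\noindent\textbf{Proof proposal.}
The plan is to reduce Theorem \ref{th3} to Theorems \ref{th1} and \ref{th2} by verifying three things: (a) the reflection coefficient $\rho_1$ built from $u_0$ lies in $\mathcal{S}(\R)$, so Theorem \ref{th1} is applicable; (b) the solution $u(x,t)$ reconstructed from this $\rho_1$ satisfies $u(\cdot,0)=u_0$; and (c) the initial-value problem for (\ref{sseq}) has at most one Schwartz-class solution. Once (a)--(c) are in hand, the asymptotic formula (\ref{uasymptotics}) is inherited immediately from Theorem \ref{th2}, since the hypothesis $\rho_1 \in \mathcal{S}(\R)$ there is exactly what (a) provides.

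For (a), I would first run the standard direct-scattering analysis on the Volterra equation defining $X(x,k)$. Because $u_0 \in \mathcal{S}(\R)$, Picard iteration on the integral equation shows that $X(x,k)$ exists, is smooth in $(x,k) \in \R^2$, extends appropriately in $k$ to the half-planes dictated by the structure of $e^{ik(x'-x)\hat\Lambda}$, and that $X(x,k)-I$ decays rapidly in $x$ together with all its derivatives. Substituting into (\ref{sdef}) and integrating by parts in $x$ repeatedly gives rapid decay of $s_{ij}(k)$ and all its $k$-derivatives; analyticity of $s_{33}$ in the upper half-plane comes from the column structure of $X$ inherited from the Lax pair formalism (developed in Section \ref{th3proofsec}, which I would quote). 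The hypothesis that $s_{33}(k) \ne 0$ for $\im k \ge 0$ means in particular $s_{33}$ is bounded below on $\R$, so the quotient $\rho_1 = \overline{s_{13}}/\overline{s_{33}}$ inherits the Schwartz property. The involutive symmetries of the Lax pair (which give $\rho_2(k)=\overline{\rho_1(-\bar k)}$ and make the jump matrix in (\ref{vdef}) consistent with the $3\times 3$ RH problem) should be read off directly from the structure of $\mathsf{U}_0$ and $\Lambda$.

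For (b), which I expect to be the main obstacle, the idea is the usual inverse-scattering reconstruction. At $t=0$ one builds, from the Jost solutions $X(x,k)$ and a companion solution $Y(x,k)$ normalized at $-\infty$, a piecewise-meromorphic $3\times 3$ matrix $M_0(x,k)$ by taking appropriate columns of $X$ and $Y$ divided by $s_{33}$ (or the relevant minor) in each half-plane. The scattering relation $Y = X s$ together with the definitions of $\rho_1, \rho_2$ should produce exactly the jump $v(x,0,k)$ of (\ref{vdef}) across $\R$, and the no-zero assumption on $s_{33}$ rules out poles. Normalization at $k=\infty$ is $M_0 = I + O(k^{-1})$ from the Volterra equation. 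Uniqueness of the RH problem in Theorem \ref{th1} then forces $m(x,0,k) = M_0(x,k)$. Expanding $X(x,k) = I + X^{(1)}(x)/k + \cdots$ in the Volterra series and reading off $X^{(1)}$ from $\mathsf{U}_0$ shows that $2i\lim_{k\to\infty}(kM_0)_{13}$ equals $u_0(x)$, and hence $u(x,0)=u_0(x)$ by (\ref{ulim}). The delicate points here are handling the $3\times 3$ symmetry constraints carefully enough to identify all entries of the jump matrix, and justifying the expansion of $m$ at $k=\infty$ uniformly in $x$.

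For (c), uniqueness of a Schwartz-class solution of the Sasa--Satsuma equation follows from standard energy or contraction arguments for this third-order dispersive system applied to the difference of two solutions; alternatively, one may invoke the bijectivity of the direct/inverse scattering transform on the Schwartz class, which is already implicit once (a) and (b) are established. Either route gives uniqueness, finishing the proof when combined with the asymptotic formula furnished by Theorem \ref{th2}.
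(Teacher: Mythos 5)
Your overall strategy is sound and, for parts (a) and (b), essentially coincides with the paper's. Part (a) is the paper's Lemma \ref{rho1lemma}: smoothness of $s(k)$, rapid decay of the off-diagonal entries via integration by parts, analyticity of $s_{33}$ in $\bar{\C}_+$, and $s_{33}\to 1$ together with the no-zero hypothesis give $\rho_1\in\mathcal{S}(\R)$. Part (b) is the paper's construction (\ref{mPsidef}): columns of the two Jost solutions divided by $s_{33}$ (upper half-plane) and combinations involving the $2\times 2$ minors divided by $s_{33}^*$ (lower half-plane), with the jump $v$ of (\ref{vdef}) verified from the scattering relation (\ref{sdef2}), the symmetries of $s$, and $\det s=1$; recovery of the potential from the $O(k^{-1})$ coefficient is (\ref{recoveru}). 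The real difference is where the construction is carried out and how uniqueness is obtained. The paper defines the eigenfunctions $\Psi_1,\Psi_2$ by (\ref{Psijdef}) for \emph{all} $t$, using an arbitrary smooth rapidly decaying solution $u(x,t)$ with $u(\cdot,0)=u_0$, and shows that this $u$ is recovered by (\ref{ulim}) from the very RH problem of Theorem \ref{th1} determined by $\rho_1$; uniqueness of the RH solution then yields uniqueness of the IVP solution and the initial condition simultaneously, with no separate PDE argument. You instead run the construction only at $t=0$ (which does suffice to show the Theorem \ref{th1} solution has initial value $u_0$) and then appeal to a separate uniqueness statement. Your first option there (an energy/Gronwall argument for Schwartz-class solutions of the third-order system) is a legitimately different route and would work if written out, at the cost of an extra PDE estimate the paper avoids. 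Your second option is not acceptable as stated: ``bijectivity of the scattering transform'' is a \emph{consequence} of Theorem \ref{th3} (see the paper's remark following it), and (a)+(b) at $t=0$ alone do not show that an arbitrary rapidly decaying solution has scattering data evolving as $\rho_1(k)e^{8ik^3t}$ — for that you need exactly the paper's $t$-dependent argument using both Lax equations (\ref{Psilax}). So either commit to the energy argument or extend your step (b) to all $t$ as the paper does; with that repair the proof is complete, and the asymptotics then follow from Theorem \ref{th2} as you say.
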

\begin{proof}
See Section \ref{th3proofsec}.
\end{proof}

\begin{remark}[Scattering transform]\upshape
Let $S$ denote the subset of $\mathcal{S}(\R)$ consisting of all functions $u_0(x)$ such that the associated scattering matrix $s(k)$ defined in (\ref{sdef}) satisfies $s_{33}(k) \neq 0$ for $\im k \geq 0$.
Theorem \ref{th3} shows that the map which takes $u_0(x)$ to $\rho_1(k)$ (the scattering transform) is a bijection from $S$ onto its image in $\mathcal{S}(\R)$. The inverse of this map (the inverse scattering transform) is given by the construction of Theorem \ref{th1} for $t = 0$.
\end{remark}

\section{Lax pair}\label{laxsec}
An essential ingredient in the proofs of Theorem \ref{th1}-\ref{th3} is the fact that equation \eqref{sseq} is the compatibility condition of the Lax pair equations \cite{ss-1991}
\begin{align}\label{psilax}
\begin{cases}
\psi_x(x,t,k) =L(x,t,k)\psi(x,t,k),\\
\psi_t(x,t,k) =Z(x,t,k)\psi(x,t,k),
\end{cases}
\end{align}
where $k \in \C$ is the spectral parameter, $\psi(x,t, k)$ is a $3\times 3$-matrix valued eigenfunction, the $3\times 3$-matrix valued functions $L$ and $Z$ are defined by
\begin{equation}\label{philax}
L(x,t,k)=\mathcal{L}(k) +\mathsf{U}(x,t), \quad Z(x,t,k)=\mathcal{Z}(k)+\mathsf{V}(x,t,k)
\end{equation}
where $\mathcal{L}(k) = -i k \Lambda$, $\mathcal{Z}(k) = 4 i k^3 \Lambda$,
\begin{align}\label{L1def}
& \Lambda=\begin{pmatrix}
1 	& 0 & 0 \\
0 & 1 & 0 \\
 0 & 0 & -1
\end{pmatrix},
 \qquad \mathsf{U} = \begin{pmatrix}
0 	& 0 & u \\
0 & 0 & \bar u \\
-\bar u & -u & 0
\end{pmatrix},
	\\ \nonumber
& \mathsf{V} =k^2 \mathsf{V}^{(2)}+k \mathsf{V}^{(1)}+\mathsf{V}^{(0)},
	\\ \nonumber
& \mathsf{V}^{(2)}=-4 \mathsf{U}, \quad  
\mathsf{V}^{(1)}=-2i \begin{pmatrix}|u|^2&u^2&u_x\\\bar{u}^2&|u|^2&\bar u_x \\\bar u_x&u_x&-2|u|^2\end{pmatrix},
	\\ \label{Z1def}
&  \mathsf{V}^{(0)}= 4|u|^2 \mathsf{U}+\mathsf{U}_{xx}-(u\bar u_x-u_x\bar u)
\begin{pmatrix} 1 & 0 & 0 \\
0 & -1 & 0 \\
0 & 0 & 0 \end{pmatrix}.
\end{align}
Note that $\mathsf{U}$ and $\mathsf{V}$ are rapidly decaying as $|x| \to \infty$ if $u$ is, and that $L,Z$ obey the symmetries
\begin{subequations}\label{symmetry}
\begin{align}
& L(x,t,k) = -L^{\dagger}(x,t,\bar{k}),&& Z(x,t,k) = -Z^{\dagger}(x,t,\bar{k}),
	\\
& L(x,t,k)=\mathcal{A}\overline{L(x,t,-\bar{k})}\mathcal{A},&& Z(x,t,k)=\mathcal{A}\overline{Z(x,t,-\bar{k})}\mathcal{A},
\end{align}
\end{subequations}
where $A^{\dagger}$ denotes the complex conjugate transpose of a matrix $A$  and
$$\mathcal{A}=\begin{pmatrix} 0&1&0\\1&0&0\\0&0&1\end{pmatrix}.$$

\section{Proof of Theorem \ref{th1}}\label{th1proofsec}
Suppose $\rho_1 \in \mathcal{S}(\R)$. The associated jump matrix $v(x,t,k)$ defined in (\ref{vdef}) obeys the symmetries
\begin{align}\label{vsymm}
& v(x,t,k) = v^{\dagger}(x,t,\bar{k}) =\mathcal{A}\overline{v(x,t,-\bar{k})}\mathcal{A}, \qquad k \in \R.
\end{align}
In particular, $v$ is Hermitian and positive definite for each $k \in \R$. Hence the result of Zhou \cite{z-1989} implies that there exists a vanishing lemma for the RH problem for $m(x,t,k)$, i.e., the associated homogeneous RH problem has only the zero solution.

Defining the nilpotent matrices $w^{\pm}(x,t,k)$ by
\begin{align*}
w^{-} = \begin{pmatrix}\0_{2\times2}&\0_{2\times 1}\\\rho(k)e^{2ikx-8ik^3t}&0\end{pmatrix}, \quad
w^{+} = \begin{pmatrix}\0_{2\times2}&\rho^{\dag}(\bar{k})e^{-2ikx+8ik^3t}\\ \0_{1\times 2} & 0\end{pmatrix},
\end{align*}
we can write $v(x,t,k)= (v^-)^{-1} v^+$, where $v^{\pm} \doteq I \pm w^{\pm}$.
For $h \in L^2(\R)$, we define the Cauchy transform $\mathcal{C} h$ by
\begin{align}\label{Cauchytransform}
(\mathcal{C}h)(z) = \frac{1}{2\pi i} \int_\R \frac{h(s)}{s - z} ds, \qquad z \in \C \setminus \R,
\end{align}
and denote the nontangential boundary values of $\mathcal{C}f$ from the left and right sides of $\R$ by $\mathcal{C}_+ f$ and $\mathcal{C}_-f$, respectively. Then $\mathcal{C}_+$ and $\mathcal{C}_-$ are bounded operators on $L^2(\R)$ and $\mathcal{C}_+ - \mathcal{C}_- = I$.
Given two functions $w^\pm \in L^2(\R) \cap L^\infty(\R)$, we define the operator $\mathcal{C}_{w}: L^2(\R) + L^\infty(\R) \to L^2(\R)$ by
\begin{align}\label{Cwdef}
\mathcal{C}_{w}(f) = \mathcal{C}_+(f w^-) + \mathcal{C}_-(f w^+).
\end{align}
For each $(x,t) \in \R \times [0,\infty)$, we have $v^\pm \in C(\R)$ and $v^{\pm}, (v^{\pm})^{-1} \in I + L^2(\R) \cap L^\infty(\R)$. 
In view of the vanishing lemma, this implies (see e.g. \cite[Theorem 5.10]{LCarleson}) that $I - \mathcal{C}_w$ is an invertible bounded linear operator on $L^2(\R)$, and that the $3 \times 3$ matrix $L^2$-RH problem for $m$ has a unique solution $m(x,t,k)$ for each  $(x,t) \in \R^2$ given by
$$m = I + \mathcal{C}(\mu (w^+ + w^-)),$$
where
$$\mu = I + (I - \mathcal{C}_w)^{-1}\mathcal{C}_w I \in I + L^2(\R).$$
The smoothness and decay of $w^\pm$ together with the smooth dependence on $(x,t)$ implies that $m$ is a classical solution of the RH problem and that $m$ admits an expansion
\begin{align}\label{mexpansion}
m(x,t,k)=I+\frac{m_1(x,t)}{k}+\frac{m_2(x,t)}{k^2}+O(k^{-3}),\qquad k\to \infty,
\end{align}
where the coefficients $m_j(x,t)$ are smooth functions of $(x,t) \in \R^2$ (see e.g. \cite[Section 4]{hl-2018-2} for details in a similar situation).
Since $\rho_1 \in \mathcal{S}(\R)$, an application of the Deift-Zhou steepest descent method \cite{dz-1993} implies that $m$ and the coefficients $m_j$ have rapid decay as $|x| \to \infty$ for each $t$. In particular, the limit in (\ref{ulim}) exists for each $(x,t) \in \R^2$ and $u(x,t) = 2i(m_1(x,t))_{13}$ is a smooth function of $(x,t) \in \R^2$ with rapid decay as $|x| \to \infty$. 

\begin{lemma}
Define $u(x,t)$ by (\ref{ulim}). Then
\begin{align}\label{lax-m}
\begin{cases}
m_x+ik[\Lambda,m]=\mathsf{U}m,\\
m_t-4ik^3[\Lambda,m]=\mathsf{V}m.
\end{cases}
 \qquad (x,t) \in \R^2, \ k \in \C \setminus \R,
\end{align}
where $\mathsf{U}$  and $\mathsf{V}$ are defined in terms of $u(x,t)$ by \eqref{L1def} and \eqref{Z1def}, respectively.
\end{lemma}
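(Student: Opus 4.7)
The plan is to follow the standard dressing-method argument. Define
$$P(x,t,k) \doteq (m_x + ik[\Lambda, m]) m^{-1}, \qquad Q(x,t,k) \doteq (m_t - 4ik^3[\Lambda, m]) m^{-1}.$$
The goal is to show that $P$ and $Q$ extend to entire functions of $k$ which, by Liouville, must be polynomials of bounded degree, and then to identify the coefficients with $\mathsf{U}$ and with $\mathsf{V}^{(2)}, \mathsf{V}^{(1)}, \mathsf{V}^{(0)}$.

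First I would establish the identities $v_x = -ik[\Lambda, v]$ and $v_t = 4ik^3[\Lambda, v]$. These follow at once from the factorization $v(x,t,k) = e^{-i(kx-4k^3 t)\Lambda}\, v_0(k)\, e^{i(kx-4k^3 t)\Lambda}$, where $v_0$ is the $(x,t)$-independent Hermitian matrix obtained by stripping the oscillatory factors from (\ref{vdef}). Substituting $m_+ = m_- v$ into the definitions and using the product rule produces, after a short calculation, $P_+ - P_- = m_-\big(v_x + ik[\Lambda, v]\big)v^{-1}m_-^{-1}$ and $Q_+ - Q_- = m_-\big(v_t - 4ik^3[\Lambda, v]\big)v^{-1}m_-^{-1}$; both vanish by the identities, so $P$ and $Q$ extend to entire functions of $k$.

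From the expansion (\ref{mexpansion}) together with $m^{-1} = I - m_1/k + (m_1^2 - m_2)/k^2 + O(k^{-3})$, and the fact that $m_x, m_t = O(1/k)$, one sees that $P$ is bounded while $Q$ grows at most like $k^2$. Liouville then forces $P(k) \equiv i[\Lambda, m_1]$ and $Q(k) = Q_2 k^2 + Q_1 k + Q_0$ with
$$Q_2 = -4i[\Lambda, m_1], \quad Q_1 = 4i[\Lambda, m_1]m_1 - 4i[\Lambda, m_2], \quad Q_0 = -4i[\Lambda, m_1](m_1^2 - m_2) + 4i[\Lambda, m_2]m_1 - 4i[\Lambda, m_3].$$
The symmetries (\ref{vsymm}) of $v$ translate, by uniqueness of the RH problem, into $m(x,t,k) = m(x,t,\bar k)^{-\dagger} = \mathcal{A}\,\overline{m(x,t,-\bar k)}\,\mathcal{A}$; matching $1/k$ coefficients gives $m_1^{\dagger} = -m_1$ and $m_1 = -\mathcal{A}\,\overline{m_1}\,\mathcal{A}$, from which one reads $(m_1)_{23} = -\overline{(m_1)_{13}}$, $(m_1)_{31} = -\overline{(m_1)_{13}}$, $(m_1)_{32} = (m_1)_{13}$. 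With $u = 2i(m_1)_{13}$ these relations give exactly $i[\Lambda, m_1] = \mathsf{U}$, proving the $x$-equation in (\ref{lax-m}) and simultaneously identifying $Q_2 = -4\mathsf{U} = \mathsf{V}^{(2)}$.

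To match $Q_1$ and $Q_0$ with $\mathsf{V}^{(1)}$ and $\mathsf{V}^{(0)}$, I would expand the already-established $x$-equation in powers of $1/k$: at orders $1/k$ and $1/k^2$ it reads $i[\Lambda, m_2] = \mathsf{U} m_1 - m_{1,x}$ and $i[\Lambda, m_3] = \mathsf{U} m_2 - m_{2,x}$. Substituting into the formulas above collapses them to $Q_1 = 4m_{1,x}$ and $Q_0 = 4(m_{2,x} - m_{1,x} m_1)$. The block-diagonal entries of $m_1$ (positions where $[\Lambda,\cdot]$ vanishes) are determined by the same order-$1/k$ relation together with the boundary condition $m_1 \to 0$ as $|x| \to \infty$ as antiderivatives of the quadratic expressions $|u|^2, u^2, \bar u^2$, and their $x$-derivatives assemble into precisely the block-diagonal part of $\mathsf{V}^{(1)}/4$; a short calculation then confirms $Q_1 = \mathsf{V}^{(1)}$. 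For $Q_0$ the same relations allow one to express the block-off-diagonal entries of $m_2$ in terms of $u_x$ and of the block-diagonal entries of $m_1$, after which the identification $Q_0 = \mathsf{V}^{(0)}$ follows by direct substitution. I expect this last identification to be the main obstacle: it is the routine but lengthy bookkeeping exercise intrinsic to deriving $\mathsf{V}^{(0)}$ from the dressing construction, and it must be done while keeping careful track of both blocks and of the symmetries of $m_2, m_3$ inherited from those of $v$.
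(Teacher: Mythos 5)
Your proposal is correct in substance but proves the vanishing step by a genuinely different mechanism. The paper defines $\mathbb{L}m = m_x + ik[\Lambda,m] - \mathsf{U}m$ and $\mathbb{Z}m = m_t - 4ik^3[\Lambda,m] - (k^2A + kB + C)m$, observes that these satisfy the \emph{homogeneous} RH problem with the same jump $v$ and decay $O(k^{-1})$, and kills them with the vanishing lemma already supplied by Zhou's theorem; you instead form the quotients $P = (m_x + ik[\Lambda,m])m^{-1}$, $Q = (m_t - 4ik^3[\Lambda,m])m^{-1}$, show via $v_x = -ik[\Lambda,v]$, $v_t = 4ik^3[\Lambda,v]$ that they have no jump, and apply Liouville. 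The algebra downstream is identical: your identification $P \equiv i[\Lambda,m_1] = \mathsf{U}$ uses exactly the symmetries $m_1 = -m_1^\dagger = -\mathcal{A}\overline{m_1}\mathcal{A}$ as in the paper, and your $Q_1, Q_0$ collapse (via the order-$k^{-1}$ and $k^{-2}$ relations (\ref{m1}), (\ref{m2}) of the $x$-equation) to $4m_{1,x}$ and $4m_{2,x} - 4m_{1,x}m_1$, which are precisely the paper's $B$ and $C$; the remaining verification $Q_0 = \mathsf{V}^{(0)}$ that you flag as bookkeeping is also only carried out tersely in the paper. What the paper's route buys is that it never needs $m^{-1}$ and reuses a tool already in place; your route requires pointwise invertibility of $m$, which you do not establish and which is also implicitly presupposed by the symmetry $m = m^\dagger(\bar{k})^{-1}$ you invoke. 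You should add the one-line argument: since $\det v \equiv 1$, $\det m$ has no jump, extends to an entire function tending to $1$, hence $\det m \equiv 1$. Both routes equally rely on differentiability in $(x,t)$ of the expansion (\ref{mexpansion}) (so that $m_x, m_t = O(k^{-1})$), and your continuation of $P,Q$ across $\R$ needs the Morera-type argument with continuous boundary values, which the classical-solution property provides. One small simplification: your appeal to the boundary condition $m_1 \to 0$ as $|x| \to \infty$ for the block-diagonal part of $m_1$ is unnecessary, since only $m_{1,x}^{(d)} = \mathsf{U}m_1^{(o)} = -\tfrac{i}{2}\mathsf{U}\Lambda\mathsf{U}$ enters $Q_1 = 4m_{1,x}$, and this is purely algebraic.
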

\begin{proof}
The symmetries (\ref{vsymm}) of $v$ together with the uniqueness of the solution of the RH problem imply the following symmetries for $m$:
\begin{align}\label{msymm}
& m(x,t,k) = m^{\dagger}(x,t,\bar{k})^{-1} =\mathcal{A}\overline{m(x,t,-\bar{k})}\mathcal{A}.
\end{align}
In particular, the coefficient $m_1$ in (\ref{mexpansion}) satisfies 
$$m_1(x,t) = -m_1^{\dagger}(x,t) =-\mathcal{A}\overline{m_1(x,t)}\mathcal{A}.$$
It follows that the definition (\ref{ulim}) of $u(x,t)$ can be expressed as
\begin{align}\label{L11}
\mathsf{U}(x,t) =i[\Lambda,m_1(x,t)].
\end{align}

Define the operator $\mathbb{L}$ by
\begin{align}\label{mathbbLdef}
\mathbb{L} m \doteq m_x+i k [\Lambda,m]-\mathsf{U} m.
\end{align}
Substituting the expansion (\ref{mexpansion}) into \eqref{mathbbLdef}, we find
\begin{align*}
\mathbb{L}m= i[\Lambda, m_1]-\mathsf{U} +O(k^{-1}), \qquad k \to \infty.
\end{align*}
In view of (\ref{L11}), this implies that $\mathbb{L}m$ satisfies the following homogeneous RH problem:
\begin{itemize}
\item $\mathbb{L}m$ is analytic in $\C\setminus \R$ with continuous boundary values on $\R$;
  \item $(\mathbb{L}m)_+=(\mathbb{L}m)_- v$ for $k \in \R$;
  \item $\mathbb{L}m=O(k^{-1})$ as $k\to\infty$.
\end{itemize}
Thus, by the vanishing lemma, $\mathbb{L}m=0$. This proves the first equation in \eqref{lax-m}.

In order to prove the second equation in \eqref{lax-m}, we define the operator $\mathbb{Z}$ by
\begin{align}\label{mathbbZdef}
\mathbb{Z}m \doteq m_t-4ik^3[\Lambda,m]-k^2 A(x,t) m-k B(x,t) m-C(x,t)m,
\end{align}
where the matrices $A(x,t), B(x,t)$ and $C(x,t)$ are yet to be determined. Substituting the asymptotic expansion \eqref{mexpansion} into \eqref{mathbbZdef}, we find
\begin{align*}
\mathbb{Z}m=&\; \big(-4i[\Lambda,m_1]-A\big)k^2 + \big(-4i[\Lambda,m_2]-Am_1-B\big)k \\
& +\big(-4i[\Lambda,m_3]-Am_2-Bm_1-C\big) +O(k^{-1}), \qquad k\to\infty.
\end{align*}
Thus, we define $A, B, C$ by the equations
\begin{subequations}
\begin{align}
\label{Adef}A&=-4i[\Lambda,m_1],\\
\label{Bdef}B&=-4i[\Lambda,m_2]-Am_1,\\
\label{Cdef}C&=-4i[\Lambda,m_3]-A m_2 -B m_1.
\end{align}
\end{subequations}
If we can show that $A = \mathsf{V}^{(2)}$, $B = \mathsf{V}^{(1)}$, and $C = \mathsf{V}^{(0)}$, it will follow from the vanishing lemma that $\mathbb{Z}m=0$, which will prove the second equation in \eqref{lax-m}.

Comparing \eqref{L11} and \eqref{Adef}, we see that $A=-4\mathsf{U} = \mathsf{V}^{(2)}$, and then \eqref{Bdef} becomes
\begin{align}\label{Bnew}
B =4\mathsf{U}m_1-4i[\Lambda,m_2].
\end{align}
The terms of order $O(k^{-1})$ in the asymptotic expansion of the equation $\mathbb{L}m=0$ yield
\begin{align}\label{m1}
m_{1,x}+i[\Lambda,m_2]=\mathsf{U} m_1.
\end{align}
Comparing \eqref{m1} with \eqref{Bnew}, it follows that $B = 4m_{1,x}$.
Given a $3\times 3$ matrix
\begin{align*}
A=\begin{pmatrix}a_{11} &a_{12}&a_{13}\\a_{21} &a_{22}&a_{23}\\a_{31} &a_{32}&a_{33}\\
\end{pmatrix},
\end{align*}
let us write $A=A^{(o)}+A^{(d)}$, where
\begin{align*}
A^{(o)}=\begin{pmatrix}0 &0&a_{13}\\0 &0&a_{23}\\a_{31} &a_{32}&0\\
\end{pmatrix},\quad A^{(d)}= \begin{pmatrix}a_{11} &a_{12}&0\\a_{21} &a_{22}&0\\0 &0&a_{33}\\
\end{pmatrix}.
\end{align*}
Equation \eqref{L11} can then be written as $m_1^{(o)}=-\frac{i}{2}\Lambda \mathsf{U}$, and hence
\begin{align}\label{m1off}
m_{1,x}^{(o)}=-\frac{i}{2}\Lambda \mathsf{U}_x.
\end{align}
According to \eqref{m1}, we have
\begin{align}\label{m1diag}
m_{1,x}^{(d)}=\mathsf{U} m_1^{(o)}=-\frac{i}{2}\mathsf{U} \Lambda \mathsf{U}.
\end{align}
Equations \eqref{m1off} and \eqref{m1diag} imply 
$$B= 4m_{1,x}=-2i(\Lambda \mathsf{U}_x+\mathsf{U} \Lambda \mathsf{U})= \mathsf{V}^{(1)}.$$

It only remains to prove that $C = \mathsf{V}^{(0)}$. The terms of order $O(k^{-2})$ in the expansion of the equation $\mathbb{L}m=0$ yield
\begin{align}\label{m2}
m_{2,x}+i[\Lambda,m_3]=\mathsf{U} m_2.
\end{align}
It follows that $C =4m_{2,x}-Bm_1$. On the other hand, \eqref{m1} and (\ref{m2}) imply
\begin{align*}
m_{2}^{(o)}=- \frac{i}{2}\Lambda\big(\mathsf{U} m_1^{(d)}-m_{1,x}^{(o)}\big), \qquad m_{2,x}^{(d)}=\mathsf{U} m_2^{(o)}.
\end{align*}
We conclude that
\begin{align*}
C =4m_{2,x}-Bm_1&=-B m_{1}^{(o)}-\frac{i}{2}\Lambda \mathsf{U} B+2 i \Lambda m_{1,xx}^{(o)}
= \mathsf{V}^{(0)},
\end{align*}
which proves the lemma.
\end{proof}

The compatibility condition of \eqref{lax-m} shows that $u(x,t)$ satisfies \eqref{sseq}. 
The proof of Theorem \ref{th1} is complete.

\section{Proof of Theorem \ref{th2}}\label{th2proofsec}
Let $\rho_1 \in \mathcal{S}(\R)$ and let $u(x,t)$ be the associated solution of (\ref{sseq}) defined by (\ref{ulim}).
Our goal is to find the asymptotics of $u(x,t)$ in the sector $\mathcal{P}$ defined by
\begin{align}\label{Pdef}
\mathcal{P}= \{(x,t) \in \R^2 \, | \, |x| \leq M t^{1/3}, t\geq 1\},
\end{align}
where $M > 0$ is a constant. Let 
$$\mathcal{P}_\geq \doteq \mathcal{P} \cap \{x \geq 0\} \quad  \text{and} \quad \mathcal{P}_\leq \doteq \mathcal{P} \cap \{x \leq 0\}$$
denote the right and left halves of $\mathcal{P}$. For conciseness, we will give the proof of the asymptotic formula (\ref{uasymptotics}) for $(x,t) \in \mathcal{P}_\geq$; the case when $(x,t) \in \mathcal{P}_\leq$ can be handled in a similar way but requires some (minor) changes in the arguments (see \cite{CL2019} for the required changes in the case of the mKdV equation).

The jump matrix $v(x,t,k)$ defined in \eqref{vdef} involves the exponentials $e^{\pm t\Phi(\zeta,k)}$, where $\Phi(\zeta,k)$ is defined by
\begin{align}
\Phi(\zeta,k) \doteq 2ik\zeta-8ik^3 \quad \text{with} \quad \zeta \doteq x/t.
\end{align}
Suppose $(x,t) \in \mathcal{P}_\geq$. Then there are two real critical points (i.e., solutions of $\partial \Phi/\partial k = 0$) located at the points $\pm k_0$, where (see Figure \ref{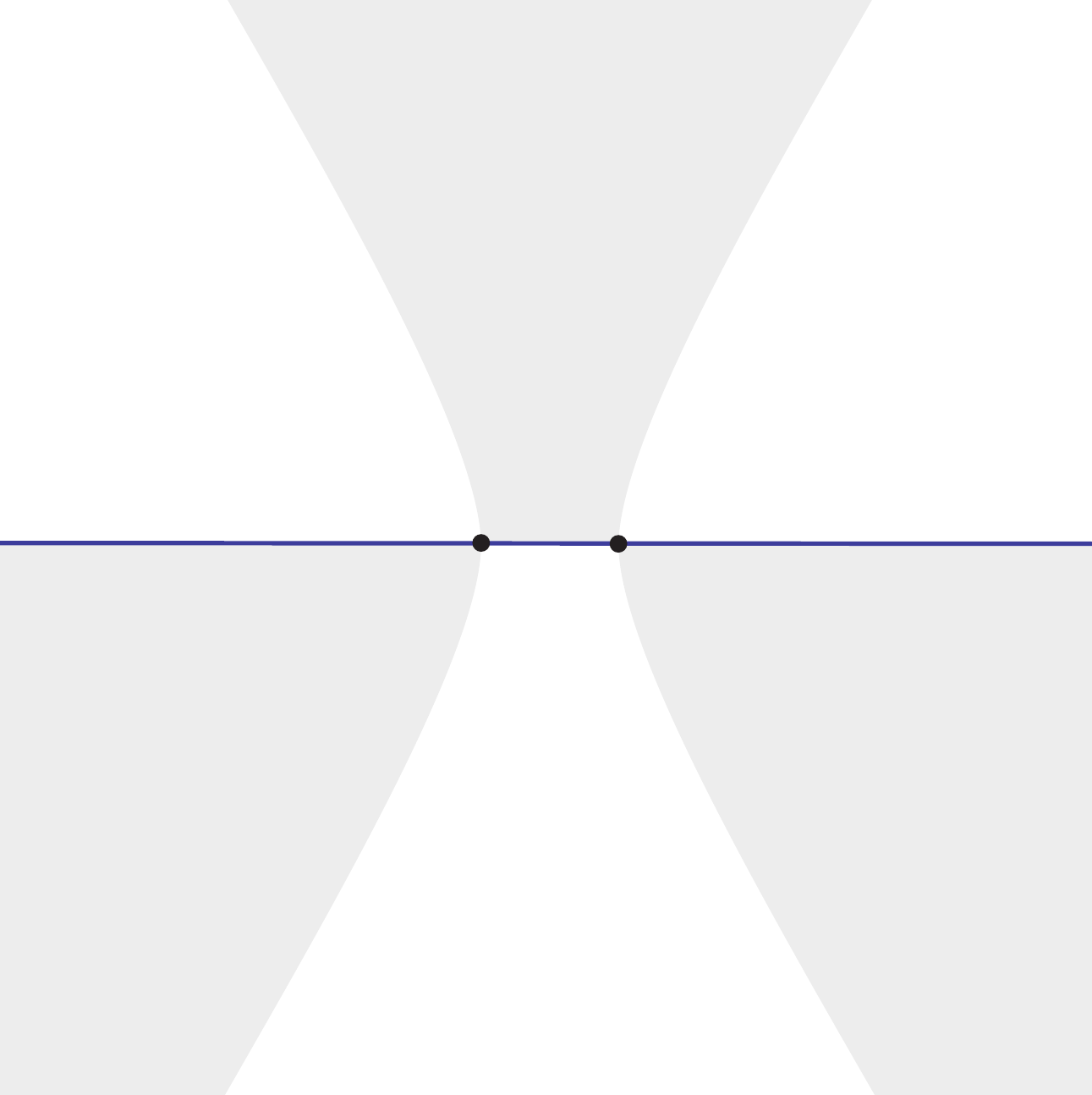})
$$k_0 = \sqrt{\frac{x}{12 t}} \geq 0.$$ 
As $t \to \infty$, the critical points $\pm k_0$ approach $0$ at least as fast as $t^{-1/3}$, i.e., $0 \leq k_0 \leq C t^{-1/3}$.

\begin{figure}
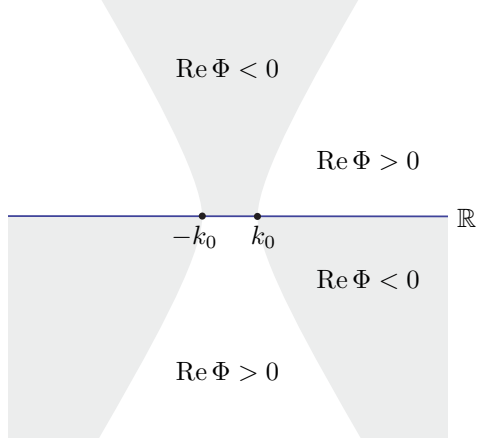

\begin{center}
    \begin{overpic}[width=.4\textwidth]{criticalpts.pdf}
      \put(102,48){\small $\R$}
      \put(55,44){\small $k_0$}
      \put(37,44){\small $-k_0$}
      \put(38,82){\small $\re \Phi < 0$}
      \put(70,61){\small $\re \Phi > 0$}
      \put(70,34){\small $\re \Phi < 0$}
      \put(38,13){\small $\re \Phi > 0$}  
     \end{overpic}
\caption{The critical points $\pm k_0$ in the complex $k$-plane together with the regions where $\re \Phi > 0$ (white) and $\re \Phi < 0$ (shaded).}
\label{criticalpts.pdf}
     \end{center}
\end{figure}

\subsection{Analytic approximation}
We first decompose $\rho = (\rho_1, \rho_2)$ into an analytic part $\rho_a$ and a small remainder $\rho_r$.
Let $N \geq 1$ be an integer. 
Let $\Gamma^{(1)} \subset \C$ denote the contour 
$$\Gamma^{(1)} = \R \cup \Gamma_1^{(1)} \cup \Gamma_2^{(1)},$$
where 
\begin{align*}
& \Gamma_1^{(1)} = \{k_0 + re^{\frac{\pi i}{6}}\, | \, r \geq 0\} \cup \{-k_0 + re^{\frac{5\pi i}{6}}\, | \, r \geq 0\},
	\\
& \Gamma_2^{(1)} = \{k_0 + re^{-\frac{\pi i}{6}}\, | \, r \geq 0\} \cup \{-k_0 + re^{-\frac{5\pi i}{6}}\, | \, r \geq 0\}.
\end{align*}	
We orient $\Gamma^{(1)}$ to the right and let $V$ (resp. $V^*$) denote the open subset between $\Gamma_1^{(1)}$ (resp. $\Gamma_2^{(1)}$) and the real line, see Figure \ref{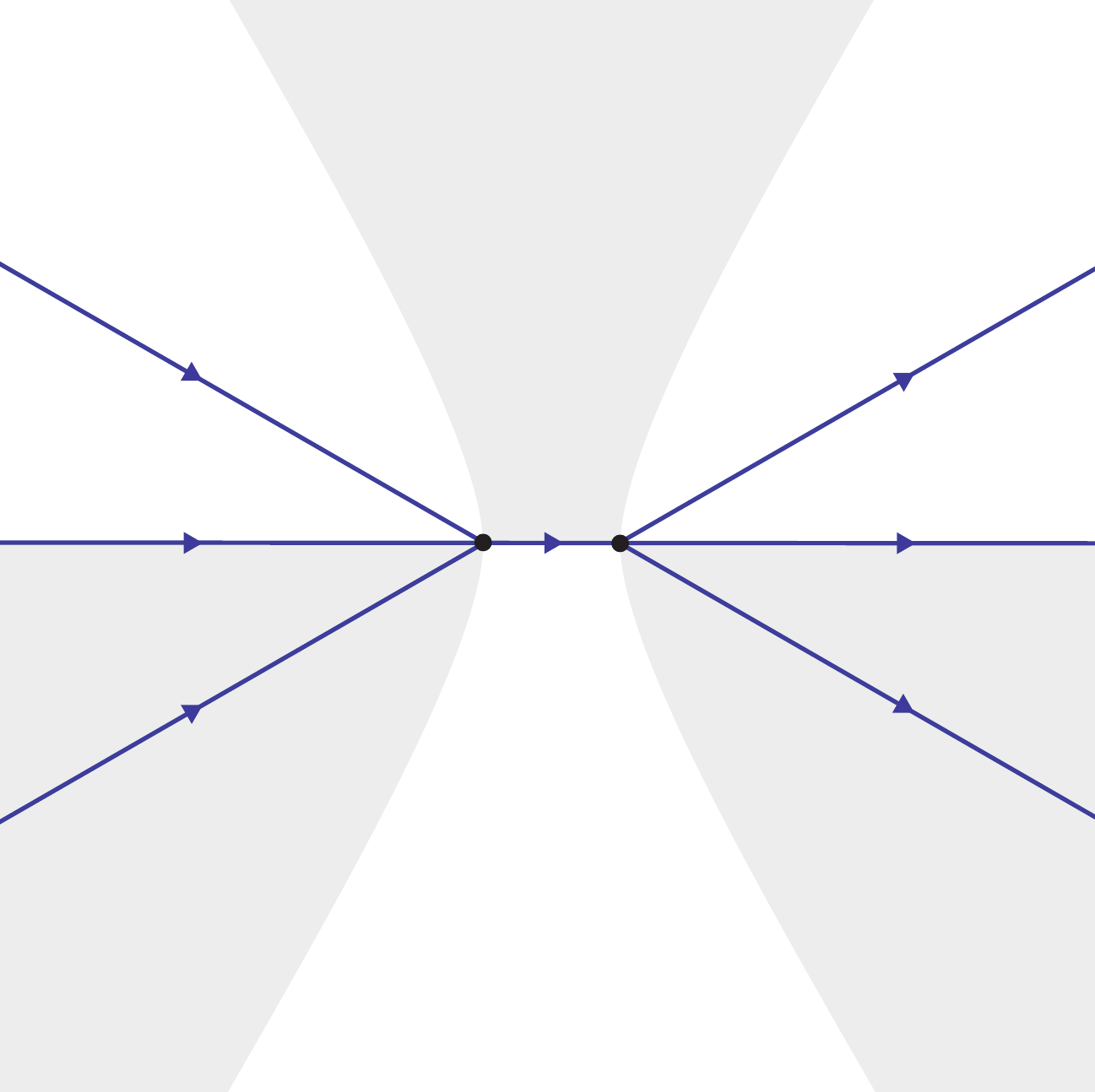}.

\begin{lemma}[Analytic approximation]\label{decompositionlemmaleq}
There exists a decomposition
\begin{align*}
& \rho_1(k) = \rho_{1,a}(x, t, k) + \rho_{1,r}(x, t, k), \qquad k \in (-\infty, -k_0)\cup (k_0, \infty),
\end{align*}
where the functions $\rho_{1,a}$ and $\rho_{1,r}$ have the following properties:
\begin{enumerate}[$(a)$]
\item For each $(x,t) \in \mathcal{P}_\geq$, $\rho_{1,a}(x, t, k)$ is defined and continuous for $k \in \bar{V}$ and analytic for $k \in V$.

\item The function $\rho_{1,a}$ obeys the following estimates uniformly for $(x,t) \in \mathcal{P}_\geq$:
\begin{align*}
& |\rho_{1,a}(x, t, k)| \leq \frac{C}{1 + |k|} e^{\frac{t}{4}|\re \Phi(\zeta,k)|}, \qquad
  k \in \bar{V}, 
\end{align*}
and
\begin{align*}
\bigg|\rho_{1,a}(x, t, k) - \sum_{j=0}^N \frac{\rho_1^{(j)}(k_0)}{j!} (k-k_0)^j\bigg| \leq C |k-k_0|^{N+1} e^{\frac{t}{4}|\re \Phi(\zeta,k)|}, \qquad k \in \bar{V}.
\end{align*}

\item The $L^1$ and $L^\infty$ norms of $\rho_{1,r}(x, t, \cdot)$ on $(-\infty, -k_0)\cup (k_0, \infty)$ are $O(t^{-N})$ as $t \to \infty$ uniformly for $(x,t) \in \mathcal{P}_\geq$.

\end{enumerate}
\end{lemma}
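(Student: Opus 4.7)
The plan is to adapt the standard Deift--Zhou decomposition lemmas for mKdV (cf.\ \cite{dz-1993,CL2019}) to the cubic phase $\Phi(\zeta,k) = 2ik\zeta - 8ik^3$; the only structural novelty is that $\Phi$ has \emph{two} critical points $\pm k_0$. Using the $k \mapsto -k$ symmetry of $\Phi$, I will build the decomposition in a neighborhood of $k_0$ and extend it analytically into the portion of $V$ sitting above it; the analogous construction near $-k_0$ will then be glued on via a smooth partition of unity. Throughout $\mathcal{P}_\geq$, the bound $0 \leq k_0 \leq Ct^{-1/3}$ keeps $k_0$ in a fixed compact set, so translations $k \mapsto k - k_0$ preserve Schwartz seminorms of $\rho_1$ uniformly in $(x,t)$.

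The key steps will be the following. \emph{(i) Analytic interpolant.} I fix $\Lambda > 0$ large enough that the points $k_0 - i\Lambda$ lie strictly below $\Gamma_2^{(1)}$, and define a rational interpolant
\[
\tilde P(x,t,k) \doteq \sum_{j=0}^{N'} \frac{\tilde c_j(x,t)}{(k - k_0 + i\Lambda)^{j+1}}
\]
for some $N' \geq N$ to be chosen, with coefficients $\tilde c_j(x,t)$ picked so that $\tilde P$ matches the Taylor expansion of $\rho_1$ at $k_0$ to order $N'$. Then $\tilde P$ is analytic in $V$, decays like $|k|^{-1}$ at infinity, and $\rho_1 - \tilde P$ vanishes to order $N'+1$ at $k_0$. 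Writing
\[
\rho_1(k) - \tilde P(x,t,k) = (k - k_0)^{N+1} S(x,t,k),
\]
the function $S$ is smooth and, after choosing $N'$ large, has enough regularity for $\hat S$ to have many continuous derivatives. \emph{(ii) Fourier splitting with $t$-dependent cutoff.} I next decompose
\[
S(x,t,k) = S^+(x,t,k) + S^-(x,t,k), \qquad S^\pm(x,t,k) \doteq \frac{1}{2\pi}\int_{I_\pm(t)} \hat S(x,t,\xi)\, e^{ik\xi}\, d\xi,
\]
with $I_+(t) \doteq [-ct,\infty)$ and $I_-(t) \doteq (-\infty,-ct)$ for a small constant $c > 0$. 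Since $|e^{ik\xi}| \leq e^{(\im k)ct}$ for $\xi \in I_+(t)$ and $\im k > 0$, the piece $S^+$ extends analytically to the full upper half-plane and in particular to $V$. \emph{(iii) Assembly.} I set
\[
\rho_{1,a}(x,t,k) \doteq \tilde P(x,t,k) + (k-k_0)^{N+1} S^+(x,t,k), \qquad \rho_{1,r}(x,t,k) \doteq (k-k_0)^{N+1} S^-(x,t,k).
\]

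For verification, property $(a)$ and the Taylor-matching bound in $(b)$ will be immediate from the construction. The pointwise estimate $|\rho_{1,a}| \leq C(1+|k|)^{-1} e^{\frac{t}{4}|\re\Phi|}$ will follow because $\tilde P$ contributes the $|k|^{-1}$ decay, while on $\bar V$ one has $|\re\Phi| \gtrsim |\im k|\cdot |k|^2$ for $|k|$ large, so the exponential weight dominates both the polynomial growth of $(k-k_0)^{N+1} S^+$ and the extra factor $e^{(\im k)ct}$ coming from the shifted Fourier cutoff (provided $c$ is chosen small relative to the opening angle of $V$). For property $(c)$, repeated integration by parts in $\xi$ in the integral defining $S^-$ will each time extract a factor $1/k$ and produce a boundary term at $\xi = -ct$ that, thanks to the smoothness of $\hat S$ (ensured by taking $N'$ sufficiently large), decays faster than any polynomial in $t$; combined with the $(k-k_0)^{N+1}$ prefactor this will give the $L^1 \cap L^\infty$ rate $O(t^{-N})$.

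The hardest part will be ensuring that all three estimates hold \emph{simultaneously} with constants uniform over $(x,t) \in \mathcal{P}_\geq$: the interpolation coefficients $\tilde c_j(x,t)$, the cutoff parameter $c$, the opening angle of $V$, and the Fourier decay of $\hat S$ must all be tuned together and controlled uniformly. The combination of the bound $k_0 \leq Ct^{-1/3}$ (which keeps $k_0$ compact) with the Schwartz hypothesis on $\rho_1$ is what will make this uniformity possible.
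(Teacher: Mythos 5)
Your overall architecture---subtract a rational interpolant matching the Taylor expansion of $\rho_1$ at $k_0$, then split the smooth remainder by a Fourier cutoff into an analytic piece and a rapidly small piece---is the standard Deift--Zhou scheme, and it is the scheme behind the proof the paper points to (the paper simply cites \cite[Lemma 5.1]{CL2019}). However, your step (ii) contains a genuine gap: you take the Fourier transform in the variable $k$ itself and cut at $\xi=-ct$, so the analytic continuation of $S^+$ into the upper half-plane is controlled only by $e^{ct\,\im k}$, and this factor is \emph{not} dominated by $e^{\frac t4|\re \Phi(\zeta,k)|}$ on all of $\bar V$. Indeed, $\re\Phi = 2\,\im k\,\bigl(12(\re k)^2-4(\im k)^2-12k_0^2\bigr)$ vanishes on the real axis and degenerates near the critical points (recall $0\le k_0\le Ct^{-1/3}$ in $\mathcal{P}_\geq$): taking $k=k_0+\epsilon e^{i\pi/12}\in V$ with $\epsilon=t^{-1/2}$ gives $\frac t4|\re\Phi|=O(t^{-1/3})\to 0$ while $ct\,\im k\sim c\sqrt t\to\infty$, so the required bound $|\rho_{1,a}|\le \frac{C}{1+|k|}e^{\frac t4|\re\Phi|}$ fails exactly in the neighborhood of the origin where the local Painlev\'e model is used. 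Your justification (``$|\re\Phi|\gtrsim |\im k|\,|k|^2$ for $|k|$ large, choose $c$ small relative to the opening angle'') covers only large $|k|$; no inequality of the form $|\re\Phi|\ge 4c\,\im k$ holds uniformly on $V$ near $\pm k_0$, for any $c>0$, so the problem cannot be fixed by shrinking $c$.

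The standard repair---and what the cited proof does---is to perform the splitting in the variable dual to the \emph{phase} rather than to $k$: after removing the Taylor part, write the remainder on $(k_0,\infty)$ as a Fourier integral $\int_{\R}\hat F(s)\,e^{s\Phi(\zeta,k)}\,ds$ (equivalently, change variables $\phi=-i\Phi$, which is real and monotone there) and cut at $s=t/4$. The piece with $s\le t/4$ continues analytically into $V$ with $|e^{s\Phi}|\le e^{\frac t4|\re\Phi|}$ automatically, since $\re\Phi\ge 0$ on $V$, so no lower bound on $|\re\Phi|$ is ever needed; the piece with $s>t/4$ is $O(t^{-N})$ on the line by the decay of $\hat F$. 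A secondary point to watch in your write-up: since $\tilde P$ decays only like $1/k$, the function $S=(\rho_1-\tilde P)/(k-k_0)^{N+1}$ decays like $|k|^{-N-2}$ no matter how large $N'$ is, so the available $\xi$-regularity of $\hat S$ (equivalently, the number of integrations by parts usable for property $(c)$) is capped at roughly $N$; ``choosing $N'$ large'' does not deliver unlimited smoothness of $\hat S$, and the $(k-k_0)^{N+1}$ prefactor in $\rho_{1,r}$ then needs a separate argument for the $L^1$ bound. But the decisive defect is the failure of estimate $(b)$ near $\pm k_0$ described above.
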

\begin{proof}
See \cite[Lemma 5.1]{CL2019}.
\end{proof}

\begin{figure}
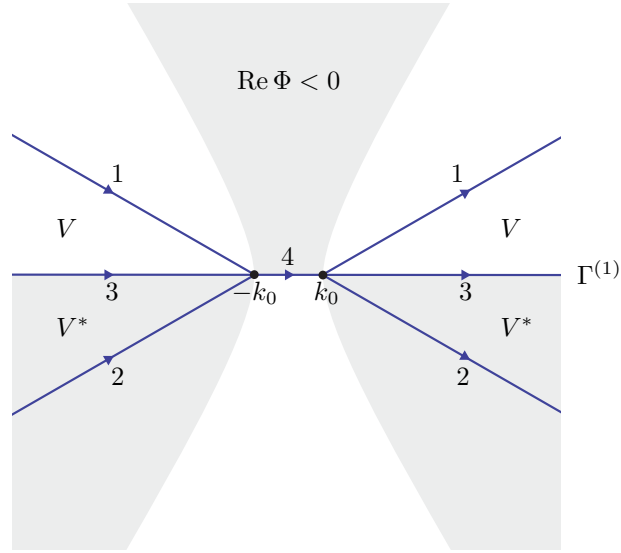

\begin{center}
\begin{overpic}[width=.5\textwidth]{Gamma1.pdf}
      \put(103,48.5){\small $\Gamma^{(1)}$}
      \put(80,67){\small $1$}
      \put(18,67){\small $1$}
      \put(81,30){\small $2$}
      \put(18,30){\small $2$}
      \put(17,45.5){\small $3$}
      \put(81.5,45.5){\small $3$}
      \put(49,52){\small $4$}
      \put(55,45.5){\small $k_0$}
      \put(40,45.5){\small $-k_0$}
      \put(8,57){\small $V$}
      \put(8,39){\small $V^*$}
      \put(89,57){\small $V$}
      \put(89,39){\small $V^*$}
      \put(41,84){\small $\re \Phi < 0$}
    \end{overpic}
     \caption{The sets $V$ and $V^*$ and the contour  $\Gamma^{(1)}$.}
\label{Gamma1.pdf}
     \end{center}
\end{figure}

Letting $\rho_{2,a}(k) \doteq \overline{\rho_{1,a}(-\bar{k})}$ and $\rho_{2,r}(k) \doteq \overline{\rho_{1,r}(-\bar{k})}$, we obtain a decomposition $\rho = \rho_a + \rho_r$ of $\rho$ by setting 
$$\rho_a(k) \doteq \begin{pmatrix}\rho_{1,a}(k) & \rho_{2,a}(k)\end{pmatrix}, \quad 
\rho_r(k) \doteq \begin{pmatrix}\rho_{1,r}(k) & \rho_{2,r}(k)\end{pmatrix}.$$

\subsection{Opening of the lenses}
The jump matrix $v$ enjoys the factorization
\begin{align}v(x,t,k)=
\begin{pmatrix}\I_{2\times2}&\0_{2\times1}\\\rho e^{t\Phi}&1\end{pmatrix} \begin{pmatrix}\I_{2\times2}& {\bf \rho} ^{\dagger} e^{-t\Phi}\\\0_{1\times2}&1\end{pmatrix}.
\end{align}
It follows that $m$ satisfies the RH problem in Theorem \ref{th1} if and only if the function $m^{(1)}$ defined by
\begin{align}
m^{(1)}(x,t,k)=\begin{cases} m(x,t,k) \begin{pmatrix}\I_{2\times2} & -\rho^{\dag}_a(x,t,\bar{k}) e^{-t\Phi}\\\0_{1\times2}&1\end{pmatrix},  & k \in V,
	\\
m(x,t,k) \begin{pmatrix}\I_{2\times2}&\0_{2\times1} \\ \rho_a(x,t,k) e^{t\Phi}&1\end{pmatrix},  & k \in V^*,
	\\
m(x,t,k),  & \text{elsewhere}, 
\end{cases}
\end{align}
satisfies the RH problem
\begin{itemize}
\item $m^{(1)}(x,t,\cdot)$ is analytic in $\C\setminus \R$ with continuous boundary values on $\Gamma \setminus \{\pm k_0\}$;
  \item $m^{(1)}_+=m^{(1)}_- v^{(1)}$ for $k \in \Gamma \setminus \{\pm k_0\}$;
  \item $m^{(1)} =I + O(k^{-1})$ as $k\to\infty$;
  \item $m^{(1)} = O(1)$ as $k \to \pm k_0$;
\end{itemize}
where the jump matrix $v^{(1)}(x,t,k)$ is given by
\begin{align}\label{vshare}
v^{(1)} =\begin{cases}
v_1^{(1)} = \begin{pmatrix}\I_{2\times2} & \rho^{\dag}_{a}(x,t,\bar{k}) e^{-t\Phi}\\\0_{1\times2}&1\end{pmatrix},
\quad &k\in \Gamma_1^{(1)},
	\\
v_2^{(1)} = \begin{pmatrix}\I_{2\times2}&\0_{2\times1} \\ \rho_{a}(x,t,k) e^{t\Phi}&1\end{pmatrix},\quad &k\in \Gamma_2^{(1)},
	\\
v_3^{(1)} = \begin{pmatrix}\I_{2\times2}&\0_{2\times1}\\ \rho e^{t\Phi}&1\end{pmatrix} \begin{pmatrix}\I_{2\times2}&  \rho^{\dagger} e^{-t\Phi}\\\0_{1\times2}&1\end{pmatrix}, \quad & k\in (-k_0,k_0),\\
v_4^{(1)}=\begin{pmatrix}\I_{2\times2}&\0_{2\times1}\\ \rho_r e^{t\Phi}&1\end{pmatrix} \begin{pmatrix}\I_{2\times2}& \rho_r^{\dagger} e^{-t\Phi}\\\0_{1\times2}&1\end{pmatrix}, \quad & k\in \R \setminus [-k_0,k_0].\\
\end{cases}
\end{align}
Note that $v^{(1)}$ and $m^{(1)}$ obey the same symmetries (\ref{vsymm}) and (\ref{msymm}) as $v$ and $m$.

\subsection{Local model}
Let us introduce new variables $y$ and $z$ by
\begin{align}
y \doteq \frac{x}{(3t)^{1/3}}, \quad z \doteq (3t)^{1/3}k,
\end{align}
so that
\begin{align}
t\Phi(\zeta,k)=2i\bigg(y z-\frac{4 z^3}{3}\bigg).
\end{align}
Fix $\epsilon > 0$ and let $D_\epsilon(0) = \{k \in \C \, | \, |k| < \epsilon\}$.
Let $\mathcal{Z}^\epsilon = (\Gamma^{(1)} \cap D_\epsilon(0))\setminus ((-\infty,-k_0) \cup (k_0, \infty))$, see Figure \ref{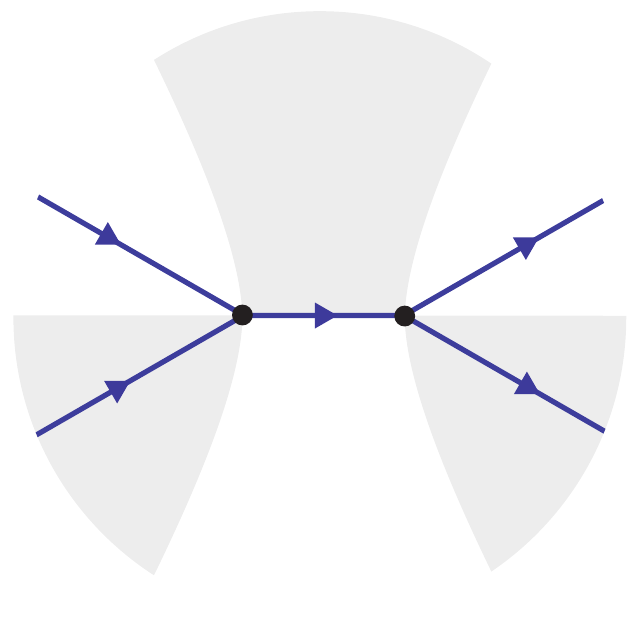}. Let $Z$ denote the contour defined in (\ref{ZdefIVg}) with $z_0 \doteq (3t)^{1/3}k_0 = \sqrt{y}/2 \geq 0$.
The map $k \mapsto z$ maps $\mathcal{Z}^\epsilon$ onto $Z \cap \{|z| < (3t)^{1/3}\epsilon\}$ and we have $\mathcal{Z}^\epsilon = \cup_{j=1}^5\mathcal{Z}_j^\epsilon$, where $\mathcal{Z}_j^\epsilon$ denotes the inverse image of $Z_j\cap \{|z| < (3t)^{1/3}\epsilon\}$ under this map.

\begin{figure}
\begin{center}
 \begin{overpic}[width=.4\textwidth]{cal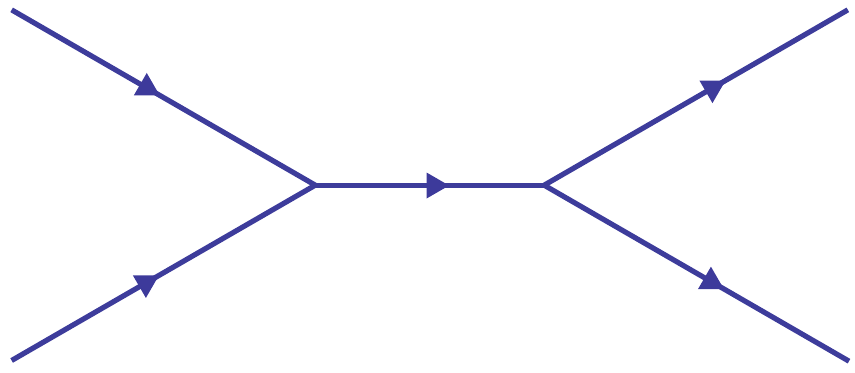}
 \put(77,65.5){\small $\mathcal{Z}_1^\epsilon$}
 \put(18,65){\small $\mathcal{Z}_2^\epsilon$}
 \put(18,32){\small $\mathcal{Z}_3^\epsilon$}
 \put(77,32){\small $\mathcal{Z}_4^\epsilon$}
 \put(47.5,54){\small $\mathcal{Z}_5^\epsilon$}
 \put(61,44){\small $k_0$}
 \put(33,44){\small $-k_0$}
 \end{overpic}
\caption{The contour $\mathcal{Z}^\epsilon = \cup_{j=1}^5\mathcal{Z}_j^\epsilon$.}
\label{calZ.pdf}   
\end{center}
\end{figure}

Let $p$ denote the $N$th order Taylor polynomial of $\rho$ at $k = 0$, i.e., 
\begin{align}\label{pNdef}
  p(t,z) \doteq \sum_{j=0}^N \frac{\rho^{(j)}(0)}{j!} k^j 
  = \sum_{j=0}^N \frac{\rho^{(j)}(0)}{j!3^{j/3}} \frac{z^j}{t^{j/3}}.
\end{align}
For large $t$ and fixed $z$, the jump matrices $\{v_{j}^{(1)}\}_1^4$ can be approximated as follows:
\begin{align}\nonumber
v_1^{(1)} &\approx \begin{pmatrix}\I_{2\times 2}& p^{\dag}(t, \bar{z})e^{-2i(yz-\frac{4 z^3}{3})} \\ \0_{1\times 2}&1\end{pmatrix},
	\\\nonumber
v_2^{(1)} & \approx \begin{pmatrix} \I_{2\times 2} & \0_{2\times 1}\\ p(t,z) e^{2i(yz-\frac{4 z^3}{3})} & 1\end{pmatrix},
	\\\nonumber
v_3^{(1)}&\approx \begin{pmatrix}\I_{2\times 2}& p^{\dag}(t, \bar{z})e^{-2i(yz-\frac{4 z^3}{3})} \\ p(t,z)e^{2i(yz-\frac{4 z^3}{3})} &1+ p(t,z) p^{\dag}(t,\bar{z}) \end{pmatrix},
	\\ \label{vapproximations}
v_4^{(1)} &\approx I.
\end{align}
Thus we expect that $m^{(1)}$ approaches the solution $m_0(x,t,k)$ defined by
\begin{align}\label{m0defIVg}
m_0(x,t,k) \doteq m^Z(y, t, z_0, z)
\end{align}
for large $t$, where $m^Z(y, t, z_0, z)$ is the solution of the model RH problem of Lemma \ref{ZlemmaIVg} with $z_0 = \sqrt{y}/2$ and $p(t,z)$ given by (\ref{pNdef}).
If $(x,t) \in \mathcal{P}_\geq$, then $(y,t,z_0) \in \mathbb{P}$, where $\mathbb{P}$ is the parameter subset defined in (\ref{parametersetIVg}). Thus Lemma \ref{ZlemmaIVg} ensures that $m_0$ is well-defined by (\ref{m0defIVg}). By (\ref{mZsymm}), $m_0$ obeys the same symmetries (\ref{msymm}) as $m$.

\begin{figure}
\begin{center}
\begin{overpic}[width=.4\textwidth]{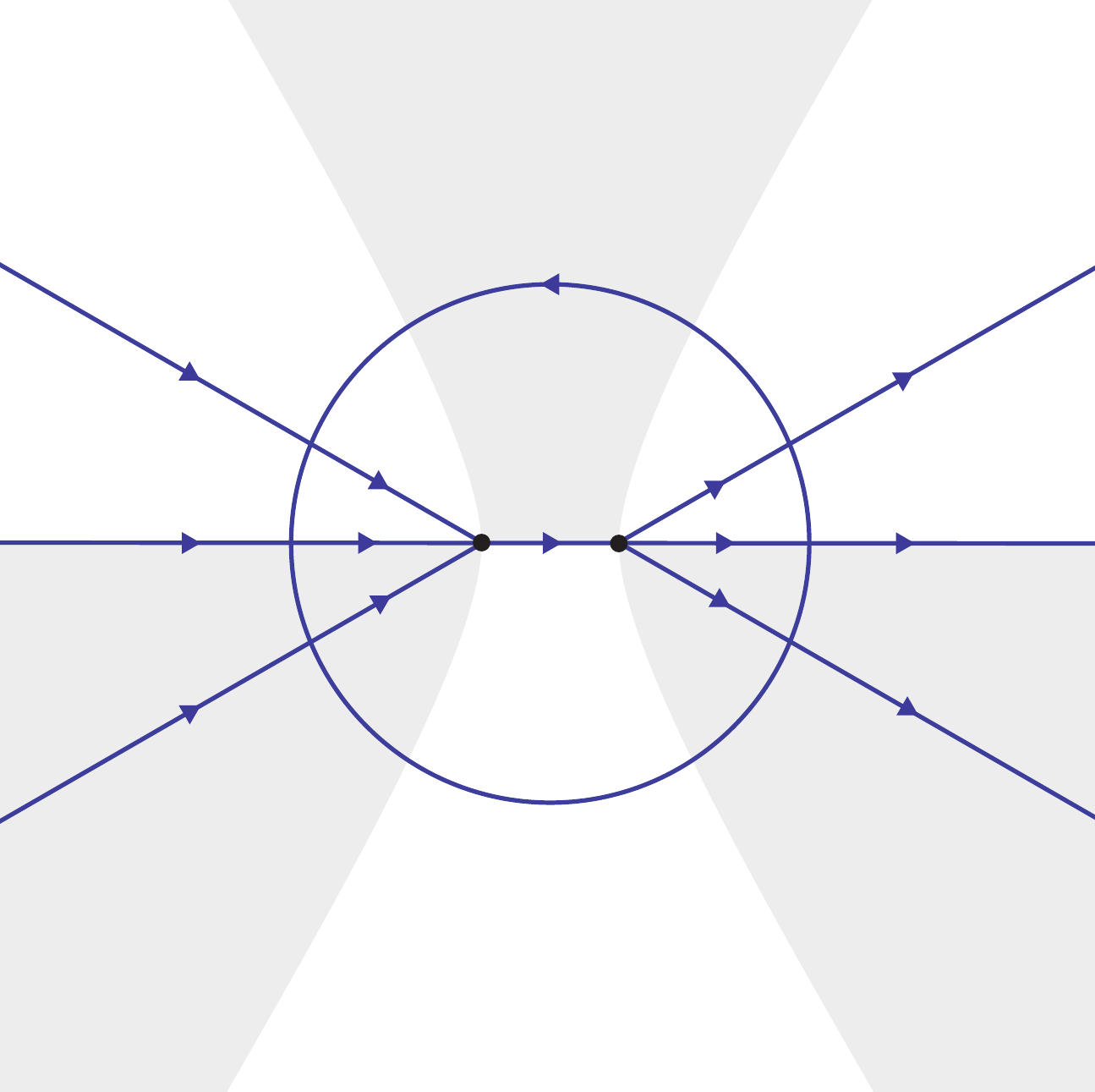}
      \put(103,48){\small $\hat{\Gamma}$}
      \put(55,44){\small $k_0$}
      \put(39,44){\small $-k_0$}
    \end{overpic}
\caption{The contour  $\hat{\Gamma}$.}
\label{Gammahat.pdf}
     \end{center}
\end{figure}

\subsection{The solution $\hat{m}$}
Fix $\epsilon >0$. Let $\hat{\Gamma} \doteq \Gamma^{(1)} \cup \partial D_\epsilon(0)$ and assume that the boundary of $D_\epsilon(0)$ is oriented counterclockwise, see Figure \ref{Gammahat.pdf}. 
Define $\hat{m}(x,t,k)$ by
$$\hat{m} = \begin{cases} m^{(1)} m_0^{-1}, & k \in D_\epsilon(0), \\
m^{(1)}, & k \in \C \setminus D_\epsilon(0),
\end{cases}.$$
Then $\hat{m}$ satisfies a small-norm RH problem with jump $\hat{m}_+=\hat{m}_- \hat{v}$ across $\hat{\Gamma}$, where the jump matrix $\hat{v}$ is given by
\begin{align}\label{vhatdef}
\hat{v} 
=  \begin{cases}
 m_{0-} v^{(1)} m_{0+}^{-1}, & k \in \hat{\Gamma} \cap D_\epsilon(0), \\
m_0^{-1}, & k \in \partial D_\epsilon(0), \\
v^{(1)},  & k \in \hat{\Gamma} \setminus \overline{D_\epsilon(0)}.
\end{cases}
\end{align}
Using Lemma \ref{ZlemmaIVg}, the rest of the proof proceeds as in the case of the mKdV equation (see \cite{CL2019}) and we only give a brief outline. 
Let $\hat{\mathcal{C}}$ be the Cauchy operator associated with $\hat{\Gamma}$ and let $\hat{\mathcal{C}}_{\hat{w}}f \doteq \hat{\mathcal{C}}_-(f\hat{w})$. Then
\begin{align}\label{mhatrepresentation}
\hat{m}(x, t, k) = I + \frac{1}{2\pi i}\int_{\hat{\Gamma}} (\hat{\mu} \hat{w})(x, t, s) \frac{ds}{s - k},
\end{align}
where $\hat{w} = \hat{v} - I$ and $\hat{\mu}(x, t, k) \in I + L^2(\hat{\Gamma})$ is defined by $\hat{\mu} = I + (I - \hat{\mathcal{C}}_{\hat{w}})^{-1}\hat{\mathcal{C}}_{\hat{w}}I$.
The expansion (\ref{mYasymptoticsIVg}) of $m^Z$ translates into expansions of $\hat{w}$ and $\hat{\mu}$ in powers of $t^{-1/3}$ with coefficients which are functions of $y$. 
It follows that there are smooth functions $h_j(y)$ such that
\begin{align*}
 \lim_{k\to \infty} k(m(x,t,k) - I)
& = \lim_{k\to \infty} k(\hat{m}(x,t,k) - I)
= - \frac{1}{2\pi i}\int_{\hat{\Gamma}} \hat{\mu}(x,t,k) \hat{w}(x,t,k) dk
	\\
& =  - \sum_{j=1}^{N} \frac{h_j(y)}{t^{j/3}} + O\big(t^{-\frac{N+1}{3}}\big), \qquad t \to \infty,
\end{align*}
uniformly for $(x,t) \in \mathcal{P}_\geq$, where $h_1(y)$ is the coefficient of $t^{-1/3}$ in the large $t$ expansion of
\begin{align*}
\frac{1}{2\pi i}\int_{\partial D_\epsilon(0)} \hat{w} dk
= -\frac{1}{2\pi i}\int_{\partial D_\epsilon(0)} \frac{m_{10}^Z(y)}{(3t)^{1/3}k} dk + O(t^{-2/3})
= -\frac{m_{10}^Z(y)}{(3t)^{1/3}} + O(t^{-2/3}).
\end{align*}
Hence, $u(x,t)=2i\lim_{k\to\infty}(k m(x,t,k))_{13} $ has an expansion of the form (\ref{uasymptotics}) with leading coefficient given by
\begin{align*}
u_1(y) = 2i \frac{m_{10}^Z(y)}{3^{1/3}} = i\frac{u_P(y;s)}{3^{1/3}\sqrt{2}}.
\end{align*}
This completes the proof of Theorem \ref{th2}.

\section{Proof of Theorem \ref{th3}}\label{th3proofsec}
Let $u_0 \in \mathcal{S}(\R)$ and suppose $u(x,t)$ is a smooth solution of (\ref{sseq}) with initial data $u(x,0) = u_0(x)$ and with rapid decay as $|x| \to \infty$.
If $\psi$ satisfies the Lax pair equations (\ref{psilax}), then the eigenfunction $\Psi$ defined by $\psi=\Psi e^{-i(kx-4k^3t)\Lambda}$ satisfies 
\begin{align}\label{Psilax}
\begin{cases}
\Psi_x+ik[\Lambda,\Psi] =\mathsf{U}\Psi,
	\\
\Psi_t-4ik^3[\Lambda,\Psi] =\mathsf{V}\Psi.
\end{cases}
\end{align}
We define two solutions $\{\Psi_j\}_1^2$ of (\ref{Psilax}) as the unique solutions of the integral equations
\begin{subequations}\label{Psijdef}
\begin{align}
 & \Psi_1(x,t,k) = I + \int_{-\infty}^x e^{ik(x'-x)\hat \Lambda} (\mathsf{U}\Psi_1)(x',t',k)dx', 
  	\\
 & \Psi_2(x,t,k) = I - \int_x^{\infty} e^{ik(x'-x)\hat \Lambda} (\mathsf{U}\Psi_2)(x',t',k)dx'.	
\end{align}
\end{subequations}

Let $\C_\pm \doteq \{\im k \gtrless 0\}$. 
The third columns of the matrix equations (\ref{Psijdef}) involves the exponential
$e^{2ik(x' - x)}$. Since the equations in (\ref{Psijdef}) are Volterra integral equations, it follows that the third column vectors of $\Psi_1$ and $\Psi_2$ are bounded and analytic for $k \in \C_-$ and $k \in \C_+$, respectively, with smooth extensions to $\R$. Similar considerations apply to the first and second columns; thus
\begin{align*}
& \Psi_1(x,t,k) \; \text{is bounded and analytic for} \; k \in (\C_+, \C_+, \C_-),
	\\
& \Psi_2(x,t,k)\; \text{is bounded and analytic for} \; k \in (\C_-, \C_-, \C_+),
\end{align*}
where $k \in (\C_+, \C_+, \C_-)$ indicates that the first, second, and third columns of the equation are valid for $k$ in $\C_+, \C_+$ and $\C_-$, respectively. 
Moreover, for each $t$ and each $j \geq 0$, there are bounded functions $f_-(x)$ and $f_+(x)$ of $x \in \R$ with rapid decay as $x \to -\infty$ and $x \to +\infty$, respectively, such that
\begin{subequations}\label{Xest}
\begin{align}\label{Xesta}
& \bigg|\frac{\partial^j}{\partial k^j}\big(\Psi_1(x,t,k) - I\big) \bigg| \leq
f_-(x), \qquad k \in (\bar{\C}_+, \bar{\C}_+, \bar{\C}_-), \ x \in \R,
	\\ \label{Xestb}
& \bigg|\frac{\partial^j}{\partial k^j}\big(\Psi_2(x,t,k) - I\big) \bigg| \leq
f_+(x), \qquad k \in (\bar{\C}_-, \bar{\C}_-, \bar{\C}_+), \ x \in \R. 
\end{align}
\end{subequations}
As $k \to \infty$, $\Psi_1$ and $\Psi_2$ have asymptotic expansions of the form
\begin{align}\label{Psijexpansions} 
  \Psi_j(x,t,k) \sim I + \sum_{n=1}^\infty \frac{\Psi_j^{(n)}(x,t)}{k^n}, \qquad j = 1,2,
\end{align}
where the coefficients $\Psi_j^{(n)}(x,t)$ are smooth bounded functions of $x$ for each $t$ and the expansion is valid uniformly for $k \in (\bar{\C}_+, \bar{\C}_+, \bar{\C}_-)$ if $j = 1$ and for $k \in (\bar{\C}_-, \bar{\C}_-, \bar{\C}_+)$ if $j = 2$. The above properties follow from an analysis of the Volterra equations (\ref{Psijdef}); see e.g. \cite{DT1979} or Theorem 3.1 in \cite{HLNonlinearFourier} for similar proofs.

The symmetries in \eqref{symmetry} imply that (cf. (\ref{msymm}))
\begin{align}\label{Psijsymm}
\Psi_j(x,t,k) =\Psi^{\dagger}_j(x,t,\bar{k})^{-1} =\mathcal{A}\overline{\Psi_j(x,t,-\bar{k})}\mathcal{A}, \qquad j=1,2.
\end{align}
Moreover, the tracelessness of $\mathsf{U}$ and $\mathsf{V}$ shows that $\det \Psi_j \equiv 1$ for $j = 1,2$. 
Indeed, the solution $\psi_j$ of (\ref{psilax}) given by $\psi_j = \Psi_j e^{-i(kx - 4k^3 t)\Lambda}$ satisfies
$$\begin{cases}
(\det \psi_j)_x = \tr(\psi_{jx} \psi_j^{-1}) \det \psi_j = -ik \det \psi_j, \\
(\det \psi_j)_t = \tr(\psi_{jt} \psi_j^{-1}) \det \psi_j = 4ik^3 \det \psi_j.
\end{cases}$$
Hence $\det \psi_j = c_j e^{-i(kx - 4k^3 t)}$ for some constant $c_j \in \C$. Thus, for each $j$, $\det \Psi_j(x,t,k)$ is independent of $(x,t)$; evaluation at $x = \pm \infty$ shows that $\det \Psi_j(x, t, k) \equiv 1$.

Define the $3 \times 3$-matrix valued spectral function $s(k)$ by
\begin{align}\label{sdef2}
  \Psi_2(x,t,k) = \Psi_1(x,t,k)e^{-i(kx-4k^3t)\hat{\Lambda}} s(k), \qquad x \in \R, \ k \in \R.
\end{align}
Letting $X(x,k) \doteq \Psi_2(x,0,k)$, we see that $s(k)$ can be expressed as in (\ref{sdef}). Since $\det \Psi_j \equiv 1$, (\ref{sdef2}) yields $\det s \equiv 1$. 
By (\ref{Psijsymm}), we have
\begin{align}\label{ssymm}
  s(k)=s^{\dagger}(\bar{k})^{-1} =\mathcal{A}\overline{s(-\bar{k})}\mathcal{A}.
\end{align}

Define $\rho_1(k)$ in terms of $s(k)$ by (\ref{rho1def}).
By assumption, $s_{33}(k)$ is nonzero for $\im k \geq 0$.

\begin{lemma}\label{rho1lemma}
The reflection coefficient $\rho_1(k)$ belongs to the Schwartz class $\mathcal{S}(\R)$. 
\end{lemma}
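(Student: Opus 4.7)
The plan is to show separately that (i) the numerator $\overline{s_{13}(k)}$ is a smooth function of $k\in\R$ all of whose derivatives decay faster than any polynomial in $|k|^{-1}$, and (ii) the denominator $\overline{s_{33}(k)}$ is smooth and bounded away from zero on $\R$. Together these imply $\rho_1\in\mathcal{S}(\R)$ by the Leibniz rule. For both steps the starting point is the Volterra integral equation defining $X(x,k)=\Psi_2(x,0,k)$, together with the bounds (\ref{Xest}) applied at $t=0$. The iteration argument producing the expansion (\ref{Psijexpansions}) in fact shows that $X(x,k)$ is smooth in $(x,k)\in\R\times\R$, with $\partial_k^j X$ bounded on $\R\times\R$ and with rapid decay in $x$ in the appropriate direction, because the Schwartz hypothesis $u_0\in\mathcal{S}(\R)$ absorbs any factor of $x'-x$ brought down by differentiation in $k$ under the integral sign. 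Hence every entry $s_{ij}(k)$ is smooth on $\R$.

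Computing $(e^{ikx\hat\Lambda}A)_{13}=e^{2ikx}A_{13}$ from $\Lambda=\mathrm{diag}(1,1,-1)$, the defining formula (\ref{sdef}) gives
\begin{align*}
s_{13}(k)=-\int_\R e^{2ikx}(\mathsf{U}_0X)_{13}(x,k)\,dx.
\end{align*}
Since $\mathsf{U}_0(x)$ and all its derivatives are Schwartz in $x$, and since $X(x,k)-I$ satisfies (\ref{Xestb}) with $f_+\in\mathcal{S}(\R_+)$, the function $(\mathsf{U}_0X)_{13}(\,\cdot\,,k)$ is Schwartz in $x$ with all seminorms bounded uniformly in $k\in\R$; the same holds for every $k$-derivative. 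Repeated integration by parts against $e^{2ikx}$ therefore shows that $k^n\partial_k^m s_{13}(k)$ is bounded on $\R$ for every pair $(m,n)$, so $s_{13}\in\mathcal{S}(\R)$.

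For the denominator, $(e^{ikx\hat\Lambda}A)_{33}=A_{33}$, so
\begin{align*}
s_{33}(k)=1-\int_\R(\mathsf{U}_0X)_{33}(x,k)\,dx
=1+\int_\R\bigl(\overline{u_0(x)}X_{13}+u_0(x)X_{23}\bigr)(x,k)\,dx.
\end{align*}
The third column of $\Psi_2$, and hence of $X$, extends analytically to $\C_+$ with bounds of the form (\ref{Xestb}), so $s_{33}$ extends analytically to $\C_+$ and continuously to $\bar\C_+$; the expansion (\ref{Psijexpansions}) shows $s_{33}(k)\to 1$ as $k\to\infty$ in $\bar\C_+$. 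Combined with the standing hypothesis that $s_{33}(k)\neq 0$ on $\bar\C_+$, a compactness argument yields a uniform lower bound $|s_{33}(k)|\geq c>0$ for $k\in\R$. Integration by parts as in the previous paragraph (noting that the $k$-independent constant $1$ contributes nothing to the decay) shows $s_{33}-1\in\mathcal{S}(\R)$, and hence $1/s_{33}$ is smooth on $\R$ with all derivatives bounded. The quotient $\rho_1(k)=\overline{s_{13}(k)}/\overline{s_{33}(k)}$ is then in $\mathcal{S}(\R)$.

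The only nontrivial point is the uniform lower bound on $|s_{33}|$ on $\R$: the rest is mechanical integration by parts. This bound is what requires the solitonless hypothesis and the asymptotic behavior of $s_{33}$ at infinity; without it, zeros of $s_{33}$ on the real axis would produce singularities in $\rho_1$.
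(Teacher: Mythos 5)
Your overall plan is the same as the paper's: show the numerator $s_{13}$ decays rapidly with all $k$-derivatives, show $s_{33}$ is smooth, tends to $1$, and (by the nonvanishing hypothesis plus compactness) is bounded below on $\R$, and conclude by the quotient/Leibniz rule. The denominator half of your argument is fine and matches what the paper does implicitly; note only that the real-axis nonvanishing of $s_{33}$ is what is used here, the full hypothesis $s_{33}\neq 0$ on $\bar\C_+$ being needed elsewhere to exclude poles. The gap is in the decay step for $s_{13}$. Your key claim, that $(\mathsf{U}_0X)_{13}(\cdot,k)=u_0(x)X_{33}(x,k)$ is Schwartz in $x$ with all seminorms bounded uniformly in $k$, is false: the bounds (\ref{Xest}) control $k$-derivatives of $X$, not $x$-derivatives, and the latter are governed by the $x$-part of the Lax pair, $X_x=-ik[\Lambda,X]+\mathsf{U}_0X$. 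The first $x$-derivative of $u_0X_{33}$ is harmless (the $(33)$ entry of the commutator vanishes), but the second involves $\partial_x X_{13}=-2ikX_{13}+u_0X_{33}$, and each further derivative multiplies the $X_{13}$-chain by another factor $-2ik$; since $X_{13}=O(k^{-1})$ and $X_{33}=O(1)$, one gets $\partial_x^n(u_0X_{33})=O(k^{n-2})$ for $n\geq 2$. Hence $n$ integrations by parts against $e^{2ikx}$ gain $k^{-n}$ but pay $k^{n-2}$: the naive argument stalls at $s_{13}=O(k^{-2})$ and never delivers rapid decay, let alone bounds on $k^n\partial_k^m s_{13}$.

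The missing idea is exactly the device the paper inserts at this point: first replace $X$ in (\ref{sdef}) by its large-$k$ expansion (\ref{Psijexpansions}), whose coefficients $\Psi_2^{(n)}(x,0)$ are smooth, bounded, and \emph{independent of $k$}; then each term $-k^{-n}\int_\R e^{2ikx}u_0(x)\bigl(\Psi_2^{(n)}\bigr)_{33}(x,0)\,dx$ decays rapidly under repeated integration by parts (no compensating powers of $k$ are produced, and $u_0\in\mathcal{S}(\R)$ kills the boundary terms and supplies integrability), while the remainder is $O(k^{-N-1})$ uniformly in $x$, with $N$ arbitrary. Equivalently one could iterate the Volterra series and treat each multilinear oscillatory integral, but some such step is needed; with it, your argument becomes the paper's proof. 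The same remark applies to the rapid decay of $s_{33}-1$ that you invoke: for the $(33)$ entry the exponential factor is absent, and in fact the paper only claims (and only needs) an asymptotic expansion $s_{33}\sim 1+\sum_n s_{33}^{(n)}k^{-n}$ uniformly in $\bar\C_+$, not that $s_{33}-1\in\mathcal{S}(\R)$; for the quotient it suffices that $1/s_{33}$ be smooth with all derivatives bounded, which this expansion and the lower bound give you.
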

\begin{proof}
The expression in (\ref{sdef}) for the $(ij)$th entry of $s(k)$ involves the exponential factor $e^{ikx(\lambda_i - \lambda_j)}$, where $\lambda_1 = \lambda_2 = - \lambda_3 = 1$.
It follows from the properties of $\Psi_2$ and $\mathsf{U}$ that $s(k)$ is a smooth function of $k \in \R$ and that the $(33)$-entry $s_{33}$ admits an analytic continuation to the upper half-plane. It also follows (by replacing $X$ in (\ref{sdef}) by its large $k$ expansion and integrating by parts repeatedly in the resulting expression) that $s_{13}, s_{23}, s_{31}, s_{32}$ have rapid decay as $|k| \to \infty$.  
For the diagonal element $s_{33}(k)$, the exponential factor is absent from the integral in (\ref{sdef}), and substituting in the large $k$ expansion of $X$ we instead obtain
$$s_{33}(k) \sim 1 + \sum_{n=1}^\infty \frac{s_{33}^{(n)}}{k^n}, \qquad k \to \infty,$$
uniformly for $k \in \bar{\C}_+$ for some coefficients $\{s_{33}^{(n)}\} \subset \C$. The lemma follows. 
\end{proof}


Let $s_{ij}^*(k) = \overline{s_{ij}(\bar{k})}$ denote the Schwartz conjugate of $s_{ij}(k)$, $i,j = 1,2,3$.
Let $[A]_j$ denote the $j$th column of a matrix $A$.

\begin{lemma}
The function $m(x,t,k)$ defined by
\begin{align}\label{mPsidef}
m = \begin{cases}
\big([\Psi_1]_1, [\Psi_1]_2, \frac{[\Psi_2]_3}{s_{33}} \big), & \im k > 0, \\
\big(\frac{s_{22}[\Psi_2]_1 - s_{21}[\Psi_2]_2}{s_{33}^*}, 
\frac{-s_{12} [\Psi_2]_1 + s_{11} [\Psi_2]_2}{s_{33}^*},
 [\Psi_1]_3 \big), & \im k < 0.
 \end{cases}
\end{align}
satisfies the RH problem of Theorem \ref{th3} with $\rho_1(k)$ given by (\ref{rho1def}).
\end{lemma}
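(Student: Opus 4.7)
The plan is to verify the three defining properties of the Riemann--Hilbert problem --- analyticity of $m$ in $\C \setminus \R$, the normalization $m = I + O(k^{-1})$ as $k \to \infty$, and the jump $m_+ = m_- v$ on $\R$ --- by unpacking the definition (\ref{mPsidef}) via the scattering relation (\ref{sdef2}), the column-by-column analyticity of $\Psi_1$ and $\Psi_2$ already established from the Volterra equations (\ref{Psijdef}), and the symmetries (\ref{ssymm}).

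For analyticity in the upper half-plane, the first two columns of $m$ coincide with $[\Psi_1]_1$ and $[\Psi_1]_2$, which are analytic in $\C_+$, while the third column $[\Psi_2]_3/s_{33}$ is analytic in $\C_+$ by the hypothesis $s_{33}(k) \neq 0$ there. For the lower half-plane, I would first extract from $s^{-1}(k) = s^{\dagger}(\bar{k})$ combined with $\det s \equiv 1$ the cofactor identities
\begin{align*}
s_{33}^*(k) = s_{11}s_{22} - s_{12}s_{21}, \quad s_{13}^*(k) = s_{21}s_{32} - s_{22}s_{31}, \quad s_{23}^*(k) = s_{12}s_{31} - s_{11}s_{32},
\end{align*}
where $s_{ij}^*(k) \doteq \overline{s_{ij}(\bar{k})}$. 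Using the analyticity of the first two columns of $\Psi_2$ in $\C_-$, the relation (\ref{sdef2}) forces $s_{11}, s_{12}, s_{21}, s_{22}$ to extend analytically to $\C_-$, and hence so does $s_{33}^*$; by Schwartz reflection of the hypothesis, $s_{33}^*$ is nonzero on $\C_- \cup \R$. Together with the analyticity of $[\Psi_1]_3$ and of $[\Psi_2]_1, [\Psi_2]_2$ in $\C_-$, this yields analyticity of $m$ on $\C_-$. The normalization $m = I + O(k^{-1})$ is immediate from (\ref{Psijexpansions}) and the induced expansion $s(k) = I + O(k^{-1})$.

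The main step is the jump. Writing (\ref{sdef2}) column by column on $\R$, with $e^{t\Phi} = e^{2ikx - 8ik^3 t}$, gives
\begin{align*}
[\Psi_2]_1 &= s_{11}[\Psi_1]_1 + s_{21}[\Psi_1]_2 + s_{31} e^{t\Phi}[\Psi_1]_3, \\
[\Psi_2]_2 &= s_{12}[\Psi_1]_1 + s_{22}[\Psi_1]_2 + s_{32} e^{t\Phi}[\Psi_1]_3, \\
[\Psi_2]_3 &= s_{13} e^{-t\Phi}[\Psi_1]_1 + s_{23} e^{-t\Phi}[\Psi_1]_2 + s_{33}[\Psi_1]_3.
\end{align*}
I would solve the first two equations for $([\Psi_1]_1, [\Psi_1]_2)$ via the $2 \times 2$ block $T = (s_{ij})_{1 \leq i,j \leq 2}$, whose determinant is $s_{33}^*$, and invoke the cofactor identities above together with $\rho_1 = s_{13}^*/s_{33}^*$ on $\R$ and $\rho_2 = s_{23}^*/s_{33}^*$ on $\R$ --- the latter identity obtained by combining $\rho_2(k) = \overline{\rho_1(-\bar{k})}$ with the $\mathcal{A}$-symmetry in (\ref{ssymm}) --- to reduce the first two columns of the jump to $[m_+]_j = [m_-]_j + \rho_j(k) e^{t\Phi}[m_-]_3$ for $j = 1, 2$. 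Dividing the third equation above by $s_{33}$ yields $[m_+]_3 = \overline{\rho_1}\,e^{-t\Phi}[m_+]_1 + \overline{\rho_2}\,e^{-t\Phi}[m_+]_2 + [m_-]_3$ on $\R$; substituting the expressions just obtained for $[m_+]_1$ and $[m_+]_2$ produces the third column of the jump and completes the verification of $m_+ = m_- v$ with $v$ as in (\ref{vdef}).

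The main obstacle is the algebraic bookkeeping in this jump step: the cancellation of the $s_{33}^*$ denominators and the identification of the resulting minor combinations with the cofactors $s_{13}^*$ and $s_{23}^*$ demand careful use of $s^{-1} = s^{\dagger}(\bar{k})$ and of the $\mathcal{A}$-symmetry, so that the reflection coefficient $\rho$ defined by (\ref{rho1def}) emerges in precisely the form required by (\ref{vdef}). Everything else is routine.
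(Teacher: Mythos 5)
Your proposal is correct and follows the same route as the paper: analyticity from the column-wise analyticity of $\Psi_1,\Psi_2$ and the hypothesis on $s_{33}$ (with $s_{33}^*=s_{11}s_{22}-s_{12}s_{21}$ from $s^{-1}(k)=s^\dagger(\bar k)$ and $\det s=1$), the jump from the columnwise form of (\ref{sdef2}) together with the symmetries (\ref{ssymm}), and the normalization from the large-$k$ behavior of $\Psi_1$, $\Psi_2$, $s$. The paper merely calls the jump verification ``a long but straightforward computation''; your cofactor identities and the column-by-column reduction are exactly that computation carried out, and they check out.
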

\begin{proof}
We saw in the proof of Lemma \ref{rho1lemma} that $s_{33}$ admits an analytic continuation to the upper half-plane. A similar argument shows that $s_{11}, s_{12}, s_{21}, s_{22}$ admit analytic continuations to the lower half-plane. Hence $m$ is well-defined by (\ref{mPsidef}) and the properties of $\Psi_1$, $\Psi_2$ together with the assumption that $s_{33}(k) \neq 0$ for $\im k \geq 0$ imply that $m(x,t,k)$ is analytic for $k \in \C\setminus \R$ with continuous boundary values on $\R$ from above and below.
The jump $m_+ =m_- v$ across $\R$ is a consequence of a long but straightforward computation which uses (\ref{sdef2}), the symmetries (\ref{ssymm}) of $s$, and the fact that $\det s = 1$. 
Finally, the normalization condition $m(x,t,k) = I+O(k^{-1})$ follows from the large $k$ behavior of $\Psi_1$, $\Psi_2$, and $s$.
\end{proof}

In view of Theorem \ref{th2}, the next lemma completes the proof of Theorem \ref{th3}.

\begin{lemma}
The solution $u(x,t)$ is given by (\ref{ulim}). 
\end{lemma}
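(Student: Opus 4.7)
The plan is to extract the $(1,3)$ entry of the coefficient of $k^{-1}$ in the large $k$ expansion of $m(x,t,k)$ from the representation (\ref{mPsidef}), and to identify it with $-iu(x,t)/2$ using the $x$-part of the Lax equation (\ref{Psilax}).

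First I approach $k=\infty$ from $\C_+$ so that (\ref{mPsidef}) yields $m_{13}(x,t,k) = (\Psi_2)_{13}(x,t,k)/s_{33}(k)$. The asymptotic expansion (\ref{Psijexpansions}) of $\Psi_2$ gives $(\Psi_2)_{13}(x,t,k) = (\Psi_2^{(1)})_{13}(x,t)/k + O(k^{-2})$ as $k\to\infty$ in $\bar{\C}_+$, while, as already noted during the proof of Lemma \ref{rho1lemma}, $s_{33}(k) = 1 + O(k^{-1})$ in $\bar{\C}_+$. Combining these gives
\begin{align*}
\lim_{k\to\infty} k\, m_{13}(x,t,k) = (\Psi_2^{(1)})_{13}(x,t).
\end{align*}

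Second, I substitute the expansion (\ref{Psijexpansions}) into the $x$-equation of (\ref{Psilax}) and match the coefficient of $k^0$, obtaining the purely algebraic relation $i[\Lambda,\Psi_2^{(1)}] = \mathsf{U}$. Since $[\Lambda, A]_{ij} = (\lambda_i - \lambda_j) A_{ij}$ with $(\lambda_1,\lambda_2,\lambda_3) = (1,1,-1)$, taking the $(1,3)$ entry yields $2i (\Psi_2^{(1)})_{13} = \mathsf{U}_{13} = u(x,t)$, hence $(\Psi_2^{(1)})_{13} = -iu(x,t)/2$. Putting the two steps together gives $2i\lim_{k\to\infty}(km(x,t,k))_{13} = u(x,t)$, which is (\ref{ulim}).

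I do not expect a genuine obstacle here: both asymptotic ingredients are standard consequences of the Volterra equations (\ref{Psijdef}) and the definition (\ref{sdef2}) of $s$. As a consistency check, one can repeat the computation from $\C_-$ using the lower branch of (\ref{mPsidef}), which gives $m_{13}(x,t,k) = (\Psi_1)_{13}(x,t,k)$; the same Lax-pair argument produces $(\Psi_1^{(1)})_{13} = -iu/2$, in agreement with the upper-half-plane calculation.
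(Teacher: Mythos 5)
Your proposal is correct and follows essentially the same route as the paper: the paper also obtains $u(x,t)=2i\lim_{k\to\infty}\big(k\Psi_j(x,t,k)\big)_{13}$ by substituting the expansions (\ref{Psijexpansions}) into (\ref{Psilax}), and then concludes via the definition (\ref{mPsidef}) of $m$ together with $s_{33}(k)=1+O(k^{-1})$. Your version merely spells out the order-$k^{0}$ relation $i[\Lambda,\Psi_j^{(1)}]=\mathsf{U}$ and the lower-half-plane consistency check explicitly, which is fine.
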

\begin{proof}
Substituting the expansions (\ref{Psijexpansions}) into (\ref{Psilax}), we find that
\begin{align}\label{recoveru}
 u(x,t) = 2i\lim_{k\to \infty} \big(k \Psi_j(x,t,k)\big)_{13}, \qquad (x,t) \in \R^2, \ j = 1,2.
\end{align}
The lemma then follows from the definition (\ref{mPsidef}) of $m$ and the fact that $s_{33}(k) = 1 + O(k^{-1})$ as $k \to \infty$.
\end{proof}

\begin{remark}[Motivation for (\ref{mPsidef})]\upshape
The form of the expression (\ref{mPsidef}) for $m$ can be motivated as follows. Let $D_1 = \C_+$ and $D_2 = \C_-$. Define a $3\times 3$-matrix valued solution $M_n(x,t,k)$, $n = 1,2$, of (\ref{Psilax}) for $k \in D_n$ by the Fredholm integral equations
\begin{align}\label{Mndef}
(M_n)_{ij}(x,t, k) = \delta_{ij} + \int_{\gamma_{ij}^n} \left(e^{(x-x')\widehat{\mathcal{L}(k)}} (\mathsf{U}M_n)(x',t,k) \right)_{ij} dx' , \qquad  i,j = 1, 2,3,
\end{align}
where the contours $\gamma^n_{ij}$, $n = 1, 2$, $i,j = 1,2,3$, are defined by
 \begin{align*}
 \gamma_{ij}^n =  \begin{cases}
 (-\infty,x),  \quad &  \re  l_i(k) \leq \re  l_j(k),
	\\
(\infty, x),  & \re  l_i(k) > \re  l_j(k),
\end{cases} \quad \text{for} \quad k \in D_n,
\end{align*}
with $\mathcal{L} = -ik\Lambda = \diag(l_1, l_2, l_3)$, i.e.,
\begin{align*}
\gamma^1=\begin{pmatrix}\gamma_1&\gamma_1&\gamma_2\\\gamma_1&\gamma_1&\gamma_2\\\gamma_1 &\gamma_1 &\gamma_1\end{pmatrix},\quad \gamma^2=\begin{pmatrix}\gamma_1&\gamma_1&\gamma_1\\\gamma_1&\gamma_1&\gamma_1\\\gamma_2 &\gamma_2 &\gamma_1\end{pmatrix}.
\end{align*}
Solving the matrix factorization problem
\begin{equation}\label{sSSnrelations}  
  s(k) = S_n(k)T_n^{-1}(k), \qquad k \in \bar{D}_n,
\end{equation}
together with the relations
\begin{align} \nonumber
& \left(S_n(k)\right)_{ij} = \delta_{ij} \quad \text{if} \quad \gamma_{ij}^n = (-\infty,x),
	\\ \nonumber
& \left(T_n(k)\right)_{ij} = \delta_{ij} \quad \text{if} \quad \gamma_{ij}^n = (\infty,x),
\end{align}
we infer that
\begin{align*}
   M_n(x,t,k) & = \Psi_1(x,t,k) e^{-i(kx-4k^3t)\hat{\Lambda}} S_n(k)	
  \\
&  = \Psi_2(x,t,k) e^{-i(kx-4k^3t)\hat{\Lambda}} T_n(k), \qquad k \in \bar{D}_n, \ n = 1, 2,
\end{align*}
where the spectral functions $S_n(k)$ and $T_n(k)$ are given in terms of the entries of $s(k)$ by
\begin{subequations}\label{SnTnexplicit}
\begin{align}\nonumber
&  S_1(k) = \begin{pmatrix}
 1 & 0 & \frac{s_{13}}{s_{33}} \\
 0 & 1 & \frac{s_{23}}{s_{33}} \\
 0 & 0 & 1 \\
  \end{pmatrix},
&&
  S_2(k) =  \begin{pmatrix}
  1 & 0 & 0 \\
 0 & 1 & 0 \\
 -\frac{s_{13}^*}{s_{33}^*} & -\frac{s_{23}^*}{s_{33}^*} & 1 \\
\end{pmatrix},
\end{align}
and
\begin{align}\nonumber
&  T_1(k) = \begin{pmatrix}
 s_{11}^* & s_{21}^* & 0 \\
 s_{12}^* & s_{22}^* & 0 \\
 s_{13}^* & s_{23}^* & \frac{1}{s_{33}} \\
   \end{pmatrix},
&&
  T_2(k) =  \begin{pmatrix}
 \frac{s_{22}}{s_{33}^*} & -\frac{s_{12}}{s_{33}^*} & s_{31}^*
   \\
 -\frac{s_{21}}{s_{33}^*} & \frac{s_{11}}{s_{33}^*} & s_{32}^*
   \\
 0 & 0 & s_{33}^* \\
\end{pmatrix}.
\end{align}
\end{subequations}
The expression (\ref{mPsidef}) for $m$ is obtained by taking $m = M_1$ for $k \in D_1$ and $m = M_2$ for $k \in D_2$.
\end{remark}

\appendix

\section{Modified Painlev\'e II RH problem}\label{appA}
Let 
\begin{align*}
& P_1 = \{re^{\frac{\pi i}{6}}\, | \, r \geq 0\} \cup \{re^{\frac{5\pi i}{6}}\, | \, r \geq 0\},
\quad P_2 = \{re^{-\frac{\pi i}{6}}\, | \, r \geq 0\} \cup \{re^{-\frac{5\pi i}{6}}\, | \, r \geq 0\},
\end{align*}
and let $P$ denote the contour $P = P_1 \cup P_2$ oriented as in Figure \ref{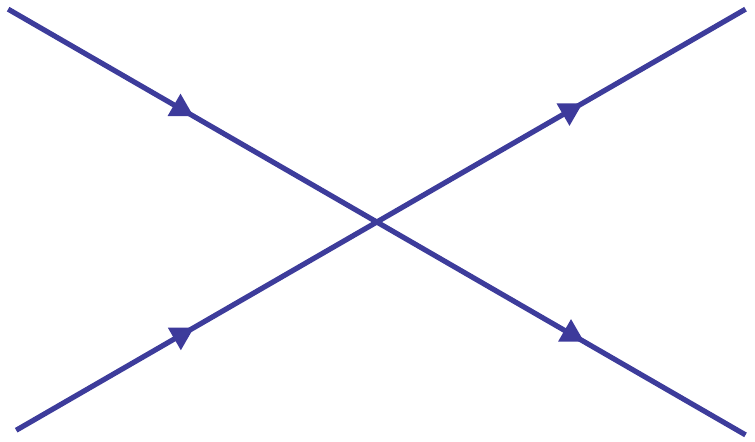}.

\begin{figure}
\begin{center}
 \begin{overpic}[width=.4\textwidth]{P.pdf}
 \put(72,48){\small $P_1$}
 \put(22,48){\small $P_1$}
 \put(23,7){\small $P_2$}
 \put(72,7){\small $P_2$}
 \put(48.5,23){\small $0$}
 \end{overpic}
   \bigskip
\caption{The contour $P = P_1 \cup P_2$.}
\label{P.pdf}
   \end{center}
\end{figure}

\begin{lemma}[modified Painlev\'e II RH problem]\label{painlevelemma}
Let $s \in \C$ be a complex number and define the matrices $S_1$ and $S_2$ by
$$S_1 = \begin{pmatrix} 1 & 0 & \bar{s} \\ 0 & 1 & s \\ 0 & 0 & 1 \end{pmatrix},
\qquad
S_2 = \begin{pmatrix} 1 & 0 & 0 \\ 0 & 1 & 0 \\ s & \bar{s} & 1  \end{pmatrix}.$$
Then the RH problem 
\begin{itemize}
\item $m^P(y, \cdot)$ is analytic in $\C\setminus P$ with continuous boundary values on $P \setminus \{0\}$;
  \item $m^P_+ = m^P_- v^P$ for $z \in P \setminus \{0\}$;
  \item $m^P =I + O(z^{-1})$ as $z\to\infty$;
  \item $m^P = O(1)$ as $z \to 0$;
\end{itemize}
where
$$v^P(y,z) = e^{-i(yz - \frac{4z^3}{3}) \hat{\Lambda}} S_n, \qquad z \in P_n, \ n = 1, 2,$$
has a unique solution $m^P(y, z)$ for each $y \in \R$. 
Moreover, there are smooth functions $\{m_j^P(y)\}_1^\infty$ of $y \in \R$ with decay as $y \to -\infty$ such that, for each integer $N \geq 0$,
\begin{align}\label{mPasymptotics}
  m^P(y, z) = I + \sum_{j=1}^N \frac{m_j^P(y)}{z^j} + O(z^{-N-1}), \qquad z \to \infty,
\end{align}
uniformly for $y$ in compact subsets of $\R$ and for $\arg z \in [0,2\pi]$. The (13)-entry of the leading coefficient $m_1^P$ is given by
$$(m_1^P(y))_{13} = \frac{u_P(y)}{2\sqrt{2}},$$
where $u_P(y) \equiv u_P(y; s)$ satisfies the modified Painlev\'e II equation (\ref{complexpainleveII}) and has constant phase, that is, $\arg u_P$ is independent of $y$. 
\end{lemma}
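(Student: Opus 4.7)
The plan is to verify the four assertions of the lemma in turn: existence and uniqueness of $m^P$, the large-$z$ expansion \eqref{mPasymptotics}, the identification of the modified Painlev\'e II equation, and the constant-phase property.

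\medskip

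\emph{Step 1 (existence/uniqueness).} A direct computation shows that $S_2 = S_1^{\dagger}$ and
\begin{align*}
S_2 S_1 = \begin{pmatrix} 1 & 0 & \bar{s} \\ 0 & 1 & s \\ s & \bar{s} & 1 + 2|s|^2 \end{pmatrix},
\end{align*}
which is Hermitian positive definite (its principal minors are $1$, $1$, $1$). Since $m^P$ is analytic in the strips between $P_1$ and $\R$ and between $P_2$ and $\R$, we may collapse both contours onto the real axis; the resulting equivalent $L^2$-RH problem carries the single jump $e^{-\theta(y,z)\hat{\Lambda}}(S_2 S_1)$ on $\R$, where $\theta(y,z) = i(yz - 4z^3/3)$. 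For real $z$, $\theta$ is purely imaginary, so $e^{-\theta\Lambda}$ is unitary and the effective jump remains Hermitian positive definite. Zhou's vanishing lemma then delivers a unique solution, exactly as in Section~\ref{th1proofsec}.

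\medskip

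\emph{Step 2 (asymptotic expansion).} Along $P_n$, the off-diagonal entries of $v^P - I$ carry the factor $e^{\mp 2\theta}$, which a short calculation on the rays $\arg z = \pm\pi/6, \pm 5\pi/6$ bounds by $e^{|y||z| - \frac{8}{3}|z|^3}$, uniformly for $y$ in compact sets. Standard small-norm theory gives
\begin{align*}
m^P(y,z) = I + \frac{1}{2\pi i}\int_P \frac{(\mu^P w^P)(y,s)}{s-z}\, ds, \qquad \mu^P = I + (I - \mathcal{C}_{w^P})^{-1}\mathcal{C}_{w^P} I,
\end{align*}
with $w^P \doteq v^P - I$; expanding $1/(s-z)$ in negative powers of $z$ yields \eqref{mPasymptotics} with smooth coefficients $m_j^P(y)$ uniformly for $\arg z \in [0,2\pi]$.

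\medskip

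\emph{Step 3 (Lax pair and mod-PII equation).} Set $\Psi^P(y,z) \doteq m^P(y,z) e^{-\theta(y,z)\Lambda}$, so that $\Psi^P_+ = \Psi^P_- S_n$ on $P_n$ has piecewise \emph{constant} jumps. Thus $A \doteq \Psi^P_y (\Psi^P)^{-1}$ and $B \doteq \Psi^P_z (\Psi^P)^{-1}$ are entire in $z$. Using $\theta_y = iz$, $\theta_z = i(y - 4z^2)$, and the expansion \eqref{mPasymptotics}, Liouville's theorem forces
\begin{align*}
A &= -iz\Lambda + i[\Lambda, m_1^P], \\
B &= 4iz^2\Lambda - 4iz[\Lambda, m_1^P] - iy\Lambda + 4i\big([\Lambda, m_1^P]\, m_1^P - [\Lambda, m_2^P]\big).
\end{align*}
By uniqueness, the symmetry $m^P(y,z) = \mathcal{A}\overline{m^P(y,-\bar{z})}\mathcal{A}$ holds (using $\mathcal{A}\overline{S_n}\mathcal{A} = S_n$ and $[\mathcal{A}, \Lambda] = 0$), which at the level of the first Laurent coefficient reads $(m_1^P)_{23} = -\overline{(m_1^P)_{13}}$. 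Consequently, with $u_P \doteq 2\sqrt{2}(m_1^P)_{13}$, the matrix $U \doteq i[\Lambda, m_1^P]$ has the same off/diagonal block structure as the Sasa--Satsuma potential $\mathsf{U}$. The compatibility equation $A_z - B_y + [A,B] = 0$, matched order-by-order in $z$, then collapses after the same algebraic reductions as in Section~\ref{th1proofsec} to precisely \eqref{complexpainleveII}.

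\medskip

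\emph{Step 4 (constant phase).} Write $s = |s|e^{i\phi}$ and let $D \doteq \diag(e^{i\phi}, e^{-i\phi}, 1)$. Since $[D,\Lambda] = 0$ and one checks directly that $D S_n(s) D^{-1} = S_n(|s|)$, the jump transforms as $D v^P(y,z;s) D^{-1} = v^P(y,z;|s|)$; uniqueness yields
\begin{align*}
m^P(y,z;s) = D^{-1} m^P(y,z;|s|) D, \qquad \text{hence} \qquad u_P(y;s) = e^{-i\phi}\, u_P(y;|s|).
\end{align*}
It remains to show $u_P(y;|s|) \in \R$. For real parameter $|s|$ the jump matrices $S_n(|s|)$ are real, and \eqref{complexpainleveII} preserves real-valued initial data (reducing to the standard Painlev\'e II equation $u'' + yu + 2u^3 = 0$ on the real line). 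A short steepest-descent analysis of the mod-PII RH problem as $y \to +\infty$ shows that $u_P(y;|s|)$ decays like a real multiple of $\mathrm{Ai}(y)$, so this real subspace is attained and hence preserved for all $y$. Therefore $\arg u_P(y;s) = -\phi$ is independent of $y$.

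\medskip

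The main obstacle I expect is the patient bookkeeping in Step~3: extracting the single scalar equation \eqref{complexpainleveII} from the matrix compatibility condition $A_z - B_y + [A,B] = 0$ for the $3\times 3$ polynomial Lax pair is algebraically involved and parallels the lengthy derivation in Section~\ref{th1proofsec}. A secondary delicate point is the justification of the real Airy-type asymptotics $u_P(y;|s|) \sim \mathrm{const}\cdot |s|\cdot \mathrm{Ai}(y)$ needed in Step~4, which I would handle by a short explicit steepest-descent deformation of the RH problem of Lemma~\ref{painlevelemma} itself.
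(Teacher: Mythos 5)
Your Steps 1--3 follow the paper's strategy (Hermitian positive-definite jump plus Zhou's vanishing lemma, then a Liouville argument giving the polynomial Lax pair in $y$ and $z$; your formulas for $A$ and $B$ agree with the paper's $\mathcal{U},\mathcal{V}$), and the conjugation trick $D=\diag(e^{i\phi},e^{-i\phi},1)$ giving $u_P(y;s)=e^{-i\phi}u_P(y;|s|)$ is a nice correct observation. But there are two genuine gaps. First, you never prove the decay of the coefficients $m_j^P(y)$ as $y\to-\infty$, which is part of the statement of Lemma \ref{painlevelemma} and, more importantly, is exactly what makes Step 3 ``collapse''. The compatibility condition does not directly produce \eqref{complexpainleveII}: it produces the system \eqref{compa}--\eqref{compd}, i.e.\ derivative identities such as $\psi_1''+2i(|\psi_3|^2)'=0$, and one must integrate three of them and discard the constants of integration before substituting into \eqref{compc}. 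Those constants are killed precisely by the $y\to-\infty$ decay of $m_1^P$, which the paper obtains from a separate steepest-descent analysis; your proposal neither proves this decay nor explains how the constants are fixed, so the assertion that the reduction is ``the same algebraic reductions as in Section \ref{th1proofsec}'' skips the delicate part. (A smaller omission in the same step: to get the Sasa--Satsuma-type structure of $i[\Lambda,m_1^P]$ you also need the Hermitian symmetry $m_1^P=-(m_1^P)^\dagger$, not only the $\mathcal{A}$-symmetry.)

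Second, the constant-phase argument in Step 4 does not work as written. Reality of $u_P(\cdot;|s|)$ for real parameter does not follow from ``the jump matrices are real'': complex conjugation maps $P_1$ to $P_2$ and turns $e^{-\theta\hat\Lambda}S_1$ into $e^{+\theta\hat\Lambda}S_1$ rather than $e^{-\theta\hat\Lambda}S_2$, so there is no straightforward conjugation symmetry of the $3\times3$ problem forcing $u_P$ real. Your Airy input is also in the wrong direction: with the sign convention of \eqref{complexpainleveII}, i.e.\ $u_P''=-yu_P-2|u_P|^2u_P$, the decaying (Airy-type) regime is $y\to-\infty$, while as $y\to+\infty$ the saddle points $\pm\sqrt{y}/2$ are real and $u_P$ oscillates; and even with the correct direction, an asymptotic statement gives only approximate reality, whereas the ``the ODE preserves real data'' argument needs exact reality of $(u_P,u_P')$ at some finite $y$. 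The paper's route avoids all of this: writing $\psi_3=re^{i\alpha}$, the reduced system \eqref{req}--\eqref{alphaeq} gives $r^2\alpha'=c_0$, and the $y\to-\infty$ decay forces $c_0=0$, hence constant phase. So the missing decay estimate is the single ingredient you need both to complete Step 3 and to replace Step 4.
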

\begin{proof}
The jump matrix $v^P$ obeys the symmetries
\begin{align}\label{vPsymm}
v^P(y, z) = (v^P)^{\dagger}(y, \bar{z}) =\mathcal{A}\overline{v^P(y, -\bar{z})}\mathcal{A}.
\end{align}
We infer from the first of these symmetries that the RH problem for $m^P$ admits a vanishing lemma, see \cite[Theorem 9.3]{z-1989}. As in Section \ref{th1proofsec}, this implies that there exists a unique solution $m^P$ which admits an expansion of the form (\ref{mPasymptotics}). A Deift-Zhou steepest descent analysis shows that the coefficients $m_j^P$ (and their $y$-derivatives) have exponential decay as $y \to -\infty$.

Let $\phi(y,z)=m^P(y,z)e^{-i(yz-\frac{4 z^3}{3})\Lambda}$. 
Then the function $\mathcal{U}(y,z)$ defined by
\begin{align}\label{U}
\mathcal{U} \doteq \phi_y\phi^{-1} = \big(m_y^P - iz m^P \Lambda\big)(m^P)^{-1}
\end{align}
is an entire function of $z$; hence $\mathcal{U}(y,z) = \mathcal{U}_0(y) +\mathcal{U}_1(y) z$.
Equation \eqref{U} then becomes
\begin{align}\label{U1}
 m_y^P - iz m^P \Lambda = (\mathcal{U}_0+\mathcal{U}_1 z)m^P
\end{align}
Substituting the expansion (\ref{mPasymptotics}) into \eqref{U1}, we find
\begin{align*}
\mathcal{U}_1 = -i \Lambda,\quad \mathcal{U}_0 =i [\Lambda, m_1^P].
\end{align*}
Similarly,
\begin{align}\label{A}
\mathcal{V} \doteq \phi_z \phi^{-1}=\bigg(m_z^P-i(y-4 z^2)m^P  \Lambda\bigg)(m^P)^{-1}
\end{align}
is entire, and hence $\mathcal{V}=\mathcal{V}_0+\mathcal{V}_1z+\mathcal{V}_2 z^2$.
Substituting \eqref{mPasymptotics} into \eqref{A}, we find
\begin{align*}
&\mathcal{V}_2=4i\Lambda, \quad \mathcal{V}_1=-4i [\Lambda, m_1^P], \quad \mathcal{V}_0=-\mathcal{V}_1 m_1^P-4i[\Lambda, m_2^P]- iy\Lambda
\end{align*}
Substituting \eqref{mPasymptotics} into (\ref{U1}), it follows that
\begin{align*}
  m^P_{1y}+i [\Lambda,m^P_2]=\mathcal{U}_0m^P_1,
\end{align*}
which gives
\begin{align*}
\mathcal{V}_0=-\mathcal{V}_1 m_1^P - 4(\mathcal{U}_0m^P_1- m^P_{1y})- iy\Lambda
\end{align*}
We have shown that $\phi$ obeys the Lax pair equations
\begin{align}\label{mPlax}
\begin{cases}
  \phi_y = \mathcal{U}\phi, \\
  \phi_z = \mathcal{V} \phi,
\end{cases}
 \qquad y \in \R^2, \ z \in \C \setminus P,
\end{align}
where $\mathcal{U}$  and $\mathcal{V}$ are expressed in terms of $m_1^P(y)$. 

As a consequence of (\ref{vPsymm}), $m^P$ obeys the symmetries
\begin{align}\label{mPsymm}
 m^P(y, z) = (m^P)^{\dagger}(y, \bar{z})^{-1} =\mathcal{A}\overline{m^P(y, -\bar{z})}\mathcal{A}.
\end{align}
In particular, the leading coefficient $m_1^P$ satisfies
$$m_1^P(y) = - m_1^P(y)^\dagger = -\mathcal{A}\overline{m_1^P(y)}\mathcal{A}.$$
Hence we can write
$$m_1^P(y) = \begin{pmatrix}\psi_1(y)&\psi_2(y)&\psi_3(y)\\-\overline{ \psi_2(y)}&\psi_1(y)&-\overline{ \psi_3(y)}\\-\overline{ \psi_3(y)}&\psi_3(y)&\psi_4(y)\end{pmatrix},$$
where $\{\psi_j(y)\}_1^4$ are complex-valued function such that $\psi_1(y), \psi_4(y) \in i \R$.
The compatibility condition
\begin{align*}
\mathcal{U}_z - \mathcal{V}_y+\mathcal{U}\mathcal{V}-\mathcal{V}\mathcal{U}=0
\end{align*}
of the Lax pair (\ref{mPlax}) is then equivalent to the following four equations:
\begin{subequations}
\begin{align} \label{compa}
\psi_1'' + 2i (\psi_3 \bar{\psi}_3 )'&=0,
	\\ \label{compb}
\psi_2''-4i \psi_3 \psi_3' &=0,
	\\ \label{compc}
\psi_3'' -2i \bar \psi_3 \psi_2' + \psi_3(y + 2i \psi_1' - 2i\psi_4')&=0,
	\\ \label{compd}
\psi_4'' - 4i (\psi_3 \bar{\psi}_3)'&=0.
\end{align}
\end{subequations}
Since $m_1^P(y)$ and its derivatives decay as $y \to -\infty$, equations \eqref{compa}, \eqref{compb}, and \eqref{compd} yield
\begin{align}\label{psi124}
\psi'_1=-2i|\psi_3|^2, \quad \psi'_2 = 2i \psi_3^2, \quad \psi'_4=4i|\psi_3|^2.
\end{align}
Substituting \eqref{psi124} into \eqref{compc}, we find
\begin{align}\label{psi3eq}
\psi_3'' + y\psi_3+16 \psi_3 |\psi_3|^2=0.
\end{align}
Writing $\psi_3(y) = r(y) e^{i\alpha(y)}$ with $r(y), \alpha(y) \in \R$, (\ref{psi3eq}) reduces to the pair of equations
\begin{subequations}
\begin{align}\label{req}
r'' + 16 r^3 + yr - (\alpha')^2r = 0, 
	\\ \label{alphaeq}
2r' \alpha' + r \alpha'' = 0.
\end{align}
\end{subequations}
Equation (\ref{alphaeq}) yields $r^2 \alpha' = c_0$, where $c_0 \in \R$ is a constant. Using this relation to eliminate $\alpha'$ from (\ref{req}), we obtain
$$r'' + 16 r^3 + yr - c_0^2 r^{-3} = 0.$$
The decay of $\psi_3$ and its derivatives as $y \to -\infty$ shows that we must have $c_0 = 0$. 
Hence $\alpha(y) = \arg \psi_3(y)$ is independent of $y$. 
The lemma follows by setting $u_P(y) \doteq 2\sqrt{2} \psi_3(y)$.
\end{proof}

\section{Model problem for Sector $\mathcal{P}_\geq$}\label{appB}
Given $z_0 \geq 0$, let 
\begin{align} \nonumber
&Z_1 = \bigl\{z_0+ re^{\frac{i\pi}{6}}\, \big| \, 0 \leq r < \infty\bigr\}, && Z_2 = \bigl\{-z_0 + re^{\frac{5i\pi}{6}}\, \big| \, 0 \leq r < \infty\bigr\},  
	\\ \nonumber
&Z_3 = \bigl\{-z_0 + re^{-\frac{5i\pi}{6}}\, \big| \, 0 \leq r < \infty\bigr\}, && Z_4 = \bigl\{z_0 + re^{-\frac{i\pi}{6}}\, \big| \, 0 \leq r < \infty\bigr\}, 
	\\ \label{ZdefIVg}
& Z_5 = \bigl\{r\, \big|  -z_0 \leq r \leq z_0\bigr\},
\end{align}
and let $Z \equiv Z(z_0)$ denote the contour $Z = \cup_{j=1}^5 Z_j$ oriented as in Figure \ref{Z.pdf}. 
Suppose
\begin{subequations}\label{psumIV}
\begin{align}
  p_1(t,z) = s + \sum_{j=1}^n \frac{p_{1,j}z^j}{t^{j/3}},
\end{align}
is a polynomial in $z t^{-1/3}$ with coefficients $s \in \C$ and $\{p_{1,j}\}_1^n \subset \C$ for some integer $n \geq 0$. Define the row-vector valued function $p(t,z)$ by 
\begin{align}
p(t,z) = \begin{pmatrix} p_1(t,z) & p_2(t,z)\end{pmatrix}, \qquad p_2(t,z) \doteq \overline{p_1(t,-\bar{z})}.
\end{align}
\end{subequations}

The long-time asymptotics in $\mathcal{P}_\geq$ is related to the solution $m^Z$ of the following family of RH problems parametrized by $y \geq 0$, $t \geq 0$, and $z_0 \geq 0$:
\begin{itemize}
\item $m^Z(y, t, z_0, \cdot)$ is analytic in $\C\setminus Z$ with continuous boundary values on $Z \setminus \{\pm z_0\}$;
  \item $m^Z_+ = m^Z_- v^Z$ for $z \in Z \setminus \{\pm z_0\}$;
  \item $m^Z =I + O(z^{-1})$ as $z\to\infty$;
  \item $m^Z = O(1)$ as $z \to \pm z_0$;
\end{itemize}
where the jump matrix $v^Z(y, t, z_0, z)$ is defined by
\begin{align}\label{vZdefIVg}
v^Z(y, t, z_0, z) = 
\begin{cases}
\begin{pmatrix} \I_{2\times 2} & p^{\dag}(t, \bar{z})e^{-2i(yz-\frac{4 z^3}{3})} \\ \0_{1\times 2} & 1\end{pmatrix}, &  z \in Z_1 \cup Z_2, 
	\\
\begin{pmatrix} \I_{2\times 2} & \0_{2\times 1}\\ p(t,z)e^{2i(yz-\frac{4 z^3}{3})}  & 1\end{pmatrix}, &  z \in Z_3 \cup Z_4, 
  	\\
\begin{pmatrix}\I_{2\times 2}& p^{\dag}(t, \bar{z})e^{-2i(yz-\frac{4 z^3}{3})} \\ p(t,z)e^{2i(yz-\frac{4 z^3}{3})} &1+ p(t,z) p^{\dag}(t,\bar{z}) \end{pmatrix}, &  z \in Z_5,
\end{cases}
\end{align}
with $p(t,z)$ given by (\ref{psumIV}). 
Define the parameter subset $\mathbb{P} \subset \R^3$ by
\begin{align}\label{parametersetIVg}
\mathbb{P} = \{(y,t,z_0) \in \R^3 \, | \, 0 \leq y \leq C_1, \, t \geq 1, \, \sqrt{y}/2 \leq z_0 \leq C_2\},
\end{align}
where $C_1,C_2 > 0$ are constants.

\begin{figure}
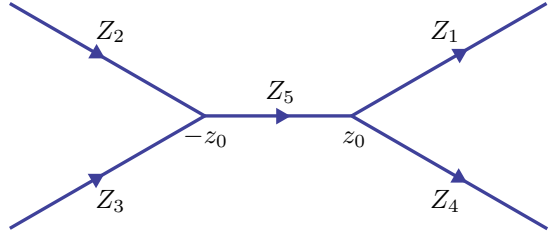

\begin{center}
 \begin{overpic}[width=.5\textwidth]{Z.pdf}
 \put(78,36){\small $Z_1$}
 \put(17,36){\small $Z_2$}
 \put(17,5){\small $Z_3$}
 \put(78,5){\small $Z_4$}
 \put(48,25){\small $Z_5$}
 \put(62,17){\small $z_0$}
 \put(33,17){\small $-z_0$}
 \end{overpic}
 \caption{The contour $Z$.}
 \label{Z.pdf}
   \end{center}
\end{figure}

\begin{lemma}[Model problem for Sector $\mathcal{P}_\geq$]\label{ZlemmaIVg}
Let $p(t,z)$ be of the form (\ref{psumIV}) for some $s \in \C$ and $\{p_{1,j}\}_1^n \subset \C$.

\begin{enumerate}[$(a)$]
\item The RH problem for $m^Z$ with jump matrix $v^Z$ given by (\ref{vZdefIVg}) has a unique solution $m^Z(y, t, z_0, z)$ whenever $(y,t,z_0) \in \mathbb{P}$. 

\item There are smooth functions $\{m_{jl}^Z(y)\}$ such that, for each integer $N \geq 1$,
\begin{align}\label{mYasymptoticsIVg}
&  m^Z(y, t, z_0, z) = I + \sum_{j=1}^N \sum_{l=0}^N \frac{m_{jl}^Z(y)}{z^j t^{l/3}}  + O\biggl(\frac{t^{-(N+1)/3}}{|z|} + \frac{t^{-1/3}}{|z|^{N+1}}\biggr), \quad z \to \infty,
\end{align}
uniformly with respect to $\arg z \in [0, 2\pi]$ and $(y,t,z_0) \in \mathbb{P}$. 

\item $m^Z(y, t, z_0, z)$ is uniformly bounded for $z \in \C\setminus Z$ and $(y,t,z_0) \in \mathbb{P}$.

\item $m^Z$ obeys the symmetries
\begin{align}\label{mZsymm}
 m^Z(y, t, z_0, z) = (m^Z)^{\dagger}(y, t, z_0, \bar{z})^{-1}, \quad m^Z(y, t, z_0, z)=\mathcal{A}\overline{m^Z(y, t, z_0, -\bar{z})}\mathcal{A}.
\end{align}

\item The (13)-entry of the leading coefficient $m_{10}^Z$ is given by
$$(m_{10}^Z(y))_{13} = \frac{u_P(y;s)}{2\sqrt{2}},$$
where $u_P(y; s)$ is the smooth solution of the modified Painlev\'e II equation (\ref{complexpainleveII}) associated with $s$ according to Lemma \ref{painlevelemma}.

\end{enumerate}
\end{lemma}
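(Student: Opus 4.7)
The plan is to establish (a)--(e) using the standard Riemann--Hilbert toolkit together with Lemma \ref{painlevelemma}.

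\emph{Parts (a) and (d).} The jump matrix $v^Z$ inherits from $p_2(t,z) = \overline{p_1(t,-\bar z)}$ the symmetries
\begin{align*}
v^Z(y,t,z_0,z) = (v^Z)^\dagger(y,t,z_0,\bar z) = \mathcal{A}\overline{v^Z(y,t,z_0,-\bar z)}\mathcal{A},
\end{align*}
and is Hermitian positive definite on $Z_5 \subset \R$ (the $1 + p p^\dagger$ entry in the $(3,3)$-block makes this manifest). Zhou's vanishing lemma \cite{z-1989} then applies as in Section \ref{th1proofsec}: the triangular jumps on $Z_1,\ldots,Z_4$ are removed by analytic continuation into the wedges, reducing the homogeneous problem to one with only a positive-definite jump on $Z_5$. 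Existence and uniqueness follow, and the symmetries (\ref{mZsymm}) are an immediate consequence of uniqueness.

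\emph{Parts (b) and (c).} Writing $m^Z = I + \mathcal{C}(\mu^Z \hat w^Z)$ with $\hat w^Z = v^Z - I$ and $\mu^Z = (I - \mathcal{C}_{\hat w^Z})^{-1} I$, I would first establish uniform invertibility of $I - \mathcal{C}_{\hat w^Z}$ on $L^2(Z)$ across $(y,t,z_0) \in \mathbb{P}$. On the unbounded parts of $Z_1, \ldots, Z_4$ the exponentials $e^{\pm 2i(yz - 4z^3/3)}$ supply uniform decay away from $\pm z_0$, while the jump on $Z_5$ is uniformly bounded. Uniform invertibility near the critical points $\pm z_0$ is secured by a local parametrix built from Lemma \ref{painlevelemma} in the coalescing regime (small $z_0$) and from a parabolic-cylinder-type model in the separated regime ($z_0$ bounded below). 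Uniform boundedness of $m^Z$ off $Z$ then follows, proving (c). To obtain (\ref{mYasymptoticsIVg}), expand $p(t,z) = \sum_{l=0}^{n} c_l(z)\, t^{-l/3}$ with each $c_l$ a monomial in $z$, expand the resolvent as a finite Neumann series plus a controlled remainder, and integrate term-by-term in the Cauchy representation for $m^Z$; the coefficients $m_{jl}^Z(y)$ emerge as convergent contour integrals that are manifestly smooth in $y$.

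\emph{Part (e).} By definition, $m_{10}^Z(y)$ is the $z^{-1}$ coefficient of the $t$-independent part of $m^Z$, equivalently the $z^{-1}$ coefficient of the solution to the RH problem with $p(t,z)$ replaced by the constant row vector $(s,\bar s)$. For this constant $p$, the factorization
\begin{align*}
v^Z|_{Z_5} = \begin{pmatrix} \I_{2\times 2} & \0_{2\times 1} \\ p\, e^{2i(yz - 4z^3/3)} & 1 \end{pmatrix} \begin{pmatrix} \I_{2\times 2} & p^\dagger e^{-2i(yz - 4z^3/3)} \\ \0_{1\times 2} & 1 \end{pmatrix}
\end{align*}
has triangular factors that extend to entire functions of $z$. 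A standard two-step deformation---(i) absorb these triangular factors into $m^Z$ on the two triangular regions bounded by $Z_1, Z_5, Z_2$ and by $Z_3, Z_5, Z_4$, which kills the jump across $Z_5$ and extends the ray jumps through the origin; (ii) translate the resulting rays onto $P_1 \cup P_2$, which is legal because the ray jump matrices are entire in $z$---produces precisely the RH problem of Lemma \ref{painlevelemma} with the same $s$. By uniqueness the transformed function equals $m^P(y,z)$, so $m_{10}^Z(y) = m_1^P(y)$, and the $(1,3)$-entry formula from Lemma \ref{painlevelemma} yields $(m_{10}^Z(y))_{13} = u_P(y;s)/(2\sqrt 2)$.

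\emph{Main obstacle.} The hardest step is the uniformity required in (c). As $(y, z_0)$ ranges over the projection of $\mathbb{P}$, the critical points $\pm z_0$ of $yz - 4z^3/3$ can coalesce (near $y = z_0 = 0$) or remain well separated, and a single parametrix must handle both regimes. Constructing a parametrix that interpolates smoothly between the Painlev\'e model (for $z_0$ near zero) and the standard parabolic-cylinder model (for $z_0$ of order one) is the technical core; once it is in place, the Neumann expansion in (b) and the bound in (c) are routine, and (e) reduces to the contour-deformation argument above.
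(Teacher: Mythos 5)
Your parts (a) and (d) follow the paper's route (Hermitian positive-definite jump on $Z\cap\R$ plus the Schwarz symmetry off $\R$, Zhou's vanishing lemma, symmetries from uniqueness), and your part (e) is essentially the paper's argument: the constant-$s$ problem on $Z$ is, up to a trivial contour deformation, the RH problem of Lemma \ref{painlevelemma}, so $m_{10}^Z=m_1^P$. The genuine gap is in (b) and (c). Within the parameter set $\mathbb{P}$ one has $0\le y\le C_1$, $\sqrt{y}/2\le z_0\le C_2$, hence $y\le 4z_0^2$, and the explicit phase estimate
\begin{align*}
\re\Bigl(-2i\bigl(yz-\tfrac{4z^3}{3}\bigr)\Bigr)\le -\tfrac{8r^3}{3}-4\sqrt{3}\,r^2 z_0,\qquad z=z_0+re^{\pi i/6}\in Z_1,
\end{align*}
(and its analogues on $Z_2,Z_3,Z_4$) gives uniform decay of the ray jumps for \emph{all} $(y,t,z_0)\in\mathbb{P}$, including $z_0$ of order one. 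There is no ``separated regime'' in this sector requiring a parabolic-cylinder model, and no local parametrices at $\pm z_0$ are needed; proposing to secure uniform invertibility that way misidentifies where the difficulty lies and would not, in any case, produce the double expansion (\ref{mYasymptoticsIVg}).

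What is actually needed — and what the paper does, following the mKdV analysis in \cite{CL2019} — is a global comparison with the constant-$s$ solution: one studies the RH problem satisfied by the quotient $m^Z(m^P)^{-1}$, where $m^P(y,z;s)$ is the Lemma \ref{painlevelemma} solution transplanted to the contour $Z$. Since $p(t,z)-s=O(t^{-1/3})$ with polynomial growth in $z$ that is killed by the exponential decay on the rays, and since $Z_5$ is compact and $m^P_\pm$, $(m^P_\pm)^{-1}$ are uniformly bounded for $y$ in compacts, the quotient satisfies a small-norm problem uniformly over $\mathbb{P}$; this is what yields the uniform bound (c), the uniform resolvent control you asserted but did not prove, and, upon expanding the jump in powers of $t^{-1/3}$ and $1/z$, the expansion (b) with coefficients smooth in $y$. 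Your direct Neumann expansion of $(I-\mathcal{C}_{\hat w^Z})^{-1}$ cannot work as stated because the $t$-independent part of the jump on $Z_5$ (the constant-$s$ part) is not small in any norm; it must be absorbed into the model $m^P$. In short, the comparison you invoke only to extract the coefficient in (e) is the central tool for (b) and (c) as well, and without it your proof of those parts does not go through.
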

\begin{proof}
We have
\begin{align*}
\re\bigg(-2i\bigg(yz - \frac{4z^3}{3}\bigg)\bigg)
= -\frac{8 r^3}{3}-4 \sqrt{3} r^2 z_0 + r \left(y-4 z_0^2\right)
\leq  -\frac{8 r^3}{3}-4 \sqrt{3} r^2 z_0,
\end{align*}
for all $z = z_0 + re^{\frac{\pi i}{6}} \in Z_1$ with $r \geq 0$,  $z_0 \geq 0$, and $0 \leq y \leq 4z_0^2$. Consequently, 
\begin{align*}
|e^{2i(y z + \frac{4z^3}{3})}| \leq Ce^{-|z-z_0|^2(z_0 + |z-z_0|)}, \qquad  z \in Z_1,
\end{align*}
uniformly for $(y,t,z_0) \in \mathbb{P}$. Analogous estimates hold for $z \in Z_j$, $j = 2,3,4$, and $|e^{\pm 2i(y z - 4z^3/3)}| = 1$ for $z \in Z_5$, showing that $v^Z - I$ has uniform decay for large $z$. 

The jump matrix $v^Z$ obeys the same symmetries (\ref{vPsymm}) as $v^P$. In particular, $v^Z$ is Hermitian and positive definite on $Z \cap \R$ and satisfies $v^Z(y,t,z_0,z) = (v^Z)^\dagger(y,t,z_0,\bar{z})$ on $Z \setminus \R$. This implies the existence of a vanishing lemma \cite{z-1989} from which we deduce the unique existence of the solution $m^Z$. The symmetries (\ref{mZsymm}) follow from the symmetries of $v^Z$.

Let $m^P(y,z) \equiv m^P(y,z;s)$ solve the same RH problem as $m^Z$ except that the polynomial $p(t,z)$ in the jump matrix (\ref{vZdefIVg}) is replaced with its leading term $s$. 
Then (up to a trivial contour deformation) $m^P$ is the solution of Lemma \ref{painlevelemma} corresponding to $s$. The remainder of the proof is analogous to the corresponding proof for the mKdV equation (see \cite{CL2019}) and consists of considering the RH problem satisfied by the quotient $m^Y (m^P)^{-1}$.
\end{proof}

\noindent
{\bf Acknowledgement}  {\it
The authors acknowledge support from the G\"oran Gustafsson Foundation, the European Research Council, Grant Agreement No. 682537, the Swedish Research Council, Grant No. 2015-05430, and the National Science Foundation of China, Grant No. 11671095.}

\end{document}